\documentclass[oneside,11pt]{amsart}%
\usepackage{amsthm,amsmath,amssymb,amsbsy,bbm,mathrsfs,supertabular}
\usepackage[authoryear]{natbib}

%%%%%%%%%%%%%%%%%%%%%%
\RequirePackage[colorlinks,citecolor=blue,urlcolor=blue]{hyperref}

%%%%% Format %%%%%%%%%%%

  \setlength\textwidth{36pc}
  \setlength\textheight{620pt}
  \setlength\oddsidemargin{1.8pc}
  \setlength\evensidemargin{1.8pc}
 \setlength\footskip{2pc}
 
 % topmargin
  \setlength\topmargin{\paperheight}%
  \addtolength\topmargin{-2in}
  \addtolength\topmargin{-\headheight}
  \addtolength\topmargin{-\headsep}
  \addtolength\topmargin{-\textheight}
  \addtolength\topmargin{-\footskip}     

\pagestyle{plain}
%
%\usepackage{showkeys}
%
%\usepackage{ulem}
%%%%%

\parindent 4mm
\parskip 1mm
%
%

%
% maths-Ensembles
\DeclareMathAlphabet{\mathonebb}{U}{bbold}{m}{n}
\newcommand{\1}{\ensuremath{\mathonebb{1}}}
\def\B{{\mathscr B}}

\def\E{{\mathbb{E}}}

\def\IL{{\mathbb{L}}}
\def\N{{\mathbb{N}}}
 
\def\R{{\mathbb{R}}}
\def\P{{\mathbb{P}}}
%

%

% ensembles mathcal et mathscr
\def\A{{\mathscr A}}
\def\B{{{\mathscr B}}}
\def\CC{\mathscr C}
\def\DD{{\mathscr D}}

\def\EE{{\mathscr E}}
\def\FF{{\mathscr{F}}}
\def\GG{{\mathscr{G}}}

\def\PP{\mathscr P}

\def\X{{\mathscr{X}}}

\def\WW{\mathcal{W}}

\def\ZZ{{\mathcal{Z}}}

%%%% Gras

\def\gX{{\mathbf{X}}}

\def\g0{{\mathbf{0}}}

%%% spe
\def\eps{{\varepsilon}}

\def\<{{\langle}}
\def\>{{\rangle}}
\def\Var{{\rm Var}}

% delimiters
\newcommand{\eref}[1]{(\ref{#1})}
\newcommand{\pa}[1]{\left({#1}\right)}
\newcommand{\norm}[1]{\left\|{#1}\right\|}
\newcommand{\cro}[1]{\left[{#1}\right]}
\newcommand{\ab}[1]{\left|{#1}\right|}
\newcommand{\ac}[1]{\left\{{#1}\right\}}

%Theoremes
\newtheorem{thm}{Theorem}
\newtheorem{lem}{Lemma}
\newtheorem{prop}{Proposition}
\newtheorem{cor}{Corollary}
\newtheorem{defi}{Definition}

% indicatrice
\def\1{1\hskip-2.6pt{\rm l}}

% argmin

% Barre:

% Binome

% Encadrement

% fleche

%Autres

\usepackage{color}

%\newcommand{\corb}[1]{\vspace{2mm}\\ \textcolor{blue}{\large{#1}}\vspace{2mm}\\}}

%%%%%%%%%%%%%%%%%%%%%%%%%%%%%%
\begin{document}
\title{Bounding the expectation of the supremum of an empirical process over a (weak) VC-major class}

\author{Y. Baraud}
\address{Univ. Nice Sophia Antipolis, CNRS,  LJAD, UMR 7351, 06100 Nice, France.}
\email{baraud@unice.fr}
\date{\today}

\begin{abstract}
Given a bounded class of functions $\GG$ and independent random variables $X_{1},\ldots,X_{n}$, we provide an upper bound for the expectation of the supremum of the empirical process over
elements of $\GG$ having a small variance. Our bound applies in the cases where $\GG$ is a VC-subgraph or a VC-major class and it is of smaller order than those one could get by using a universal entropy bound over the whole class $\GG$. It also involves explicit constants and does not require the knowledge of the entropy of $\GG$.
\end{abstract}

\maketitle

\section{Introduction}
The control of the fluctuations of an empirical process is a central tool in statistics for establishing
 the rate of convergence over a set of parameters of some specific estimators such as 
minimum contrast ones for example.  These techniques have been used over the years in many papers among
which van de Geer~\citeyearpar{Geer90}, Birg\'e and Massart~\citeyearpar{MR1240719}, Barron, 
Birg\'e and Massart~\citeyearpar{MR1679028} and the connections between empirical process theory 
and statistics are detailed at length in the book by van der Vaart and Wellner~
\citeyearpar{MR1385671}. With the concentration of measure phenomenon and Talagrand's
Theorem~1.4~\citeyearpar{Talagrand96} relating the control of the supremum of an empirical process 
over a class of functions $\FF$ to the expectation of this supremum, the initial problem reduces to the evaluation of that expectation. This can be done under universal entropy conditions which measure the massiveness of a class $\FF$ by bounding from above and uniformly with respect to probability measures $Q$ on $\FF$ the number $N(\FF,Q,\eps)$ of $\IL_{2}(Q)$-balls of radius $\eps$ that are necessary to cover $\FF$. A ready to use inequality is given by Theorem~3.1 in Gin\'e and Koltchinski~
\citeyearpar{MR2243881}. Roughly speaking their result says 
the following. Let $\FF$ admit an envelop function $F\le 1$ (which means that $\ab{f}\le F\le1$ for all 
$f\in\FF$) and $\log N(\FF,Q,\eps)$ be not larger than $H(\norm{F}_{\IL_{2}(Q)}/\eps)$ for some nondecreasing function $H$ independent of $Q$ and satisfying some mild conditions. Then, given $n$ i.i.d. random variables $X_{1},\ldots,X_{n}$ with an arbitrary distribution $P$,
\begin{equation}\label{gine-kolt}
\E\cro{Z(\FF)}\le C(H)\cro{\sigma \sqrt{nH\pa{2\sigma^{-1}\norm{F}_{\IL_{2}(P)}}}+H\pa{2\sigma^{-1}\norm{F}_{\IL_{2}(P)}}}
\end{equation}
where 
%beg
\begin{equation}
\qquad Z(\FF)=\sup_{f\in\FF}\ab{\sum_{i=1}^{n}\left(\strut f(X_{i})-\E\cro{f(X_{i})}\right)},
\label{Eq-ZF}
\end{equation}
%end
$C(H)$ is a positive number depending on $H$, and $\sigma\in (0,1]$ satisfies 
$\sup_{f\in\FF}\Var(f(X_{1}))\le \sigma^{2}$.

However, computing the universal entropy of a class of functions $\FF$ is not an easy task and inequality~\eref{gine-kolt}  might not be so easy to use in general. For illustration, let us consider the case of $\FF=\GG\cap \B(g_{0},r)$ where $\GG$ is the set of nonincreasing functions from $[0,1]$ into itself and $\B(g_{0},r)$ the $\IL_{2}(P)$-ball centered at $g_{0}\in\GG$ with radius $r>0$. The universal entropy of $\FF$, which depends on the choice of $g_{0}$, is usually unknown. However, one may use that of $\GG$, which is of order $1/\eps$, to bound the universal entropy of $\FF\subset \GG$ from above. Taking for envelope function $F$ the constant function equal to 1, we derive from~\eref{gine-kolt} that there exists a universal constant $C>0$ such that
%beg
\begin{equation}
\E\cro{Z(\FF)}\le C\cro{\sqrt{n \sigma}+\sigma^{-1}}.
\label{Eq-1}
\end{equation}
%end
While this inequality provides a satisfactory upper bound for $\E\cro{Z(\FF)}$ in general, Gin\'e and Koltchinski~\citeyearpar{MR2243881} (Example~3.8 p.1173) noticed that $\E\cro{Z(\FF)}$ was actually of smaller order than the right-hand side of \eref{Eq-1} when $g_{0}=0$. This phenomenon is actually easy to explain and we shall see that the function $g_{0}=0$ has in fact nothing magic: if $g_{0}$ is decreasing very fast on $[0,1]$ then it is quite easy to oscillate around $g_{0}$ and still remain nonincreasing on $[0,1]$. This implies that  $\GG\cap\B(g_{0},r)$ is actually massive around $g_{0}$. It is however impossible to oscillate  around a function $g_{0}$ which is constant without violating the monotonicity constraint. For a constant function $g_{0}$, $\GG\cap\B(g_{0},r)$ turns out to be less massive and $\E\cro{Z(\FF)}$ much smaller than that of the previous set. A general entropy bound on $\GG$ which allows to bound the entropies of  all sets 
$\GG\cap\B(g_{0},r)$ independently of $g_0$ therefore provides a pessimistic upper bound in the case of a constant function $g_0$. 

The above argument is not only valid when $\GG$ consists of monotone functions but more generally when $\GG$ is a bounded VC-major class on $\R$ for instance. For such a class, the family of all level sets  $\{g>c\}$ with $g\in\GG$ and $c\in\R$ form a VC-class of subsets of $\R$. When a function $g$ oscillates around $c$, the level set $\{g>c\}$ is a union of disjoint intervals and since the class of all unions of disjoint intervals is not VC, the elements of $\GG$ cannot oscillate arbitrarily around the constant function $g_{0}=c$.

The aim of this paper is to provide an upper bound for $\E\cro{Z(\FF)}$ when $\FF$ consists of the elements of a class $\GG$ (including the cases of VC-major and VC-subgraph classes) which satisfy some suitable control of their $\IL_{2}$-norms or variances.  The bounds we get are non-asymptotic, involve explicit numerical constants and are true as long as the random variables $X_{1},\ldots, X_{n}$ are independent but not necessarily i.i.d. They allow to improve the bounds one could obtain by using a naive upper bound on the entropy of the whole class~$\GG$. 

As already mentioned, the expectations of suprema of empirical processes play a central role in statistics and it is well known (we refer the reader to Theorem~5.52 in the book of van der Vaart~\citeyearpar{MR1652247} and to the historical references therein) that, given a sampling model indexed by a metric space $\Theta$, the rate of convergence of a minimum contrast estimator toward a parameter $\theta_{0}\in \Theta$  is governed by the expectation of the supremum of an empirical process over the elements $g_{\theta}$ of a class $\GG=\{g_{\theta},\ \theta\in \Theta\}$ lying within a small ball around $g_{\theta_{0}}$. Such connections between suprema of empirical processes and rates of convergence (or more generally risk bounds) of an estimator are not restricted to minimum contrast estimators and have also recently proved, in Baraud, Birg\'e and Sart~\citeyearpar{Baraud:2014qv}, to be an essential tool for the study of $\rho$-estimators. Under suitable assumptions on $\GG$ and because of the phenomenon we have explained above, one can expect some faster rates of convergence for these estimators toward specific parameters $\theta_{0}$. An illustration of this fact, which relies on the results of the present paper, can be found in Baraud and Birg\'e~\citeyearpar{Baraud:2015kq}. We show that the $\rho$-estimator built
on a class $\FF$ of densities satisfying some shape constraints achieves a rate of convergence toward some specific elements of $\FF$ which may be much faster than the minimax rate over the whole class. This phenomenon is actually not specific to $\rho$-estimators and was already observed for the Grenander estimator of a monotone density which converges at parametric rate when the target density is piecewise constant, as noticed by Birg\'e~\citeyearpar{MR1026298}, although the minimax rate over the whole set is of order $n^{-1/3}$. 

Our paper is organised as follows. The main definitions, including those of VC-classes, VC-major and weak VC-major classes, as well as some basic properties relative to these classes are given in Section~\ref{ssect-def}. The main results are presented in Section~\ref{ssect-main}. The proof of our main theorems, namely Theorems~\ref{casind} and~\ref{casind2}, are postponed to Section~\ref{sect-pm}. We also establish there upper bounds for $\E[Z(\FF)]$ in the special case where $\FF$ consists of indicator functions indexed by a class of sets $\CC$ since these bounds may be of independent interest. When $\CC$ is VC and the $X_{i}$ i.i.d., these bounds are compared to those provided by Boucheron {\it et al.}~\citeyearpar{MR3185193}. Finally Section~\ref{sect-pf} gathers the proofs of our propositions and that of Corollary~\ref{VC-M} which is specific to the case of $\FF$ being a VC-major class and $X_{1},\ldots,X_{n}$ i.i.d.

In the sequel, we shall use the following conventions and notations. The word {\it countable} will always mean {finite or countable} and, given a set $A$, $|A|$ and $\PP(A)$ will respectively denote the cardinality of $A$ and the class of all its subsets. Given two numbers $a,b$, $a\vee b$ and $a\wedge b$ mean $\max\{a,b\}$ and $\min\{a,b\}$ respectively. By convention, $\sum_{\varnothing}=0$.

\section{The setting and the main result}\label{sect-main}
Throughout the paper, $X_{1},\ldots,X_{n}$ are independent 
random variables defined on a probability space $(\Omega,\WW,\P)$ with values in a measurable space $(\X,\A)$, $\FF$ is a class of real-valued measurable functions on $(\X,\A)$ and $\eps_{1},\ldots,
\eps_{n}$ are i.i.d.\ Rademacher random variables (which means that $\eps_i$ takes the values $\pm 1$ with probability $1/2$) independent of the $X_i$. We recall that $Z(\FF)$ is defined by \eref{Eq-ZF} and set 
\[
\overline Z(\FF)=\sup_{f\in\FF}\ab{\sum_{i=1}^{n}\eps_{i}f(X_{i})}.
\]
In order to avoid measurability issues, 
$\E\cro{Z(\FF)}$ and $\E\cro{\overline Z(\FF)}$ mean $\sup_{\FF'}\E\cro{Z(\FF')}$ and 
$\sup_{\FF'}\E\cro{\overline Z(\FF')}$, respectively, where the suprema run among all countable subsets 
$\FF'$ of $\FF$. The relevance of the random variable $\overline Z(\FF)$ is due to the following classical symmetrization argument (see van der Vaart and Wellner~\citeyearpar{MR1385671}, Lemma~2.3.6) : 
%
% LEMMA
\begin{lem}\label{L-Sym}
For all $a_{1},\ldots,a_{n}\in\R$,
\begin{equation}\label{eq-sym}
\E\cro{\sup_{f\in\FF}\ab{\sum_{i=1}^{n}\pa{f(X_{i})-\E\cro{f(X_{i})}}}}\le 2\E\cro{\sup_{f\in\FF}\ab{\sum_{i=1}^{n}\eps_{i}\pa{f(X_{i})-a_{i}}}}
\end{equation}
In particular, 
\begin{equation}\label{eq-sym1}
\E\cro{Z(\FF)}\le2\E\cro{\overline Z(\FF)}.
\end{equation}
\end{lem}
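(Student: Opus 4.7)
The plan is to prove inequality~\eref{eq-sym} by the classical symmetrization trick; inequality~\eref{eq-sym1} then follows by taking all $a_i=0$. As stated in the excerpt, I would first reduce to a countable subclass $\FF'\subset \FF$ so that all suprema appearing below are measurable, and then establish the inequality with $\FF'$ in place of $\FF$; passing to the supremum over $\FF'$ gives the result by definition of $\E[Z(\FF)]$ and $\E[\overline Z(\FF)]$.

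The core step is standard. Let $X_1',\ldots,X_n'$ be an independent copy of $X_1,\ldots,X_n$, independent of everything else. Since $\E[f(X_i)]=\E[f(X_i')\mid X_1,\ldots,X_n]$, Jensen's inequality applied to the convex function $\sup_{f}|\cdot|$ yields
\begin{equation*}
\E\!\left[\sup_{f\in\FF'}\left|\sum_{i=1}^{n}\bigl(f(X_i)-\E[f(X_i)]\bigr)\right|\right]
\le \E\!\left[\sup_{f\in\FF'}\left|\sum_{i=1}^{n}\bigl(f(X_i)-f(X_i')\bigr)\right|\right].
\end{equation*}
Next, the vector $(f(X_i)-f(X_i'))_{i=1}^n$ is symmetric under independent sign flips of its components, because $(X_i,X_i')$ and $(X_i',X_i)$ have the same joint distribution. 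Consequently, introducing the Rademacher variables $\eps_i$ independent of $(X_i,X_i')$, the two expressions
\begin{equation*}
\sum_{i=1}^{n}\bigl(f(X_i)-f(X_i')\bigr) \quad\text{and}\quad \sum_{i=1}^{n}\eps_i\bigl(f(X_i)-f(X_i')\bigr)
\end{equation*}
have the same distribution as processes indexed by $f\in\FF'$, so their suprema have the same expectation.

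The last step brings in the constants $a_i$. Writing $f(X_i)-f(X_i')=\bigl(f(X_i)-a_i\bigr)-\bigl(f(X_i')-a_i\bigr)$ and applying the triangle inequality followed by subadditivity of $\sup$, the previous quantity is bounded by
\begin{equation*}
\E\!\left[\sup_{f\in\FF'}\left|\sum_{i=1}^{n}\eps_i\bigl(f(X_i)-a_i\bigr)\right|\right] + \E\!\left[\sup_{f\in\FF'}\left|\sum_{i=1}^{n}\eps_i\bigl(f(X_i')-a_i\bigr)\right|\right],
\end{equation*}
and since $(X_i)$ and $(X_i')$ have the same law the two terms are equal, producing the factor $2$ and the desired inequality. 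There is no serious obstacle here: the only delicate point is the customary one of measurability of the suprema, which is dispatched by working with countable $\FF'$ and using the definitions of $\E[Z(\FF)]$ and $\E[\overline Z(\FF)]$ given just before the lemma.
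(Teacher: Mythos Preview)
Your argument is correct and follows essentially the same route as the paper's own proof: introduce an independent copy $(X_i')$, use Jensen to replace $\E[f(X_i)]$ by $f(X_i')$, invoke the exchangeability of $(X_i,X_i')$ to insert the Rademacher signs, then split $f(X_i)-f(X_i')=(f(X_i)-a_i)-(f(X_i')-a_i)$ and apply the triangle inequality. The only addition is your explicit reduction to a countable $\FF'$, which the paper handles implicitly via the convention stated just before the lemma.
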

For the sake of completeness, we provide a proof in Section~\ref{sect-pm} below.

\subsection{Basic definitions and properties}\label{ssect-def}
We recall the following.
\begin{defi}
A class $\CC$ of subsets of some set $\ZZ$ is said to shatter a finite subset $Z$ of $\ZZ$ if 
$\{C\cap Z, C\in\CC\}=\PP(Z)$ or, equivalently, $|\{C\cap Z, C\in\CC\}|=2^{|Z|}$.
A non-empty class $\CC$ of subsets of $\ZZ$ is a VC-class if there exists an integer $k\in\N$ such that $\CC$ cannot shatter any subset of $\ZZ$ with cardinality larger than $k$. The dimension $d\in\N$ of $\CC$ is then the smallest of these integers $k$.
\end{defi}
Of special interest is the class $\CC$ of all intervals of $\R$ which is VC with dimension~2: for $Z=\{0,1\}$,  $\{C\cap Z, C\in\CC\}=\PP(Z)$ and whatever $Z'=\{x_{1},x_{2},x_{3}\}$ with $x_{1}<x_{2}<x_{3}$, $\{x_{1},x_{3}\}\not\in \{C\cap Z', C\in\CC\}$.

We extend this definition from classes of sets to classes of functions in the following way. 
\begin{defi}
Let $\FF$ be a non-empty class of functions on a set $\X$. We shall say that $\FF$ is weak VC-major with dimension $d\in\N$ if $d$ is the smallest integer $k\in\N$ such that, for all $u\in\R$, the class
%beg
\begin{equation}
\CC_{u}(\FF)=\left\{\strut\{x\in\X\mbox{ such that }\ f(x)>u\},\ f\in\FF\right\}
\label{Eq-Cu}
\end{equation}
%end
is a VC-class of subsets of $\X$ with dimension not larger than $k$.  
\end{defi}
If $\FF$ consists of monotone functions on $(\X,\A)=(\R,\B(\R))$, $\CC_{u}(\FF)$ consists of intervals of $\R$  and $\FF$ is therefore weak VC-major with dimension not larger than 2. For the same reasons, this is also true for the class $\FF$ of nonnegative functions $f$ on $\R$ which are monotone on an interval of $\R$ (depending on $f$) and vanish elsewhere.

There exist other ways of extending the concept of a VC-class of sets to classes of functions. The two main ones encountered in the literature are the following: 
\begin{defi}
Let $\FF$ be a non-empty class of functions on a set $\X$. 
\begin{itemize}
\item The class $\FF$ {is VC-major} with dimension $d\in\N$ if  
\[
\CC(\FF)={\left\{\strut\{x\in\X\mbox{ such that }f(x)>u\},\ f\in\FF,\ u\in\R\right\}}
\]
is a VC-class of subsets of $\X$ with dimension $d$. 
\item The class $\FF$ is VC-subgraph with dimension $d$ if 
\[
\CC_{\times}(\FF)={\left\{\strut\{(x,u)\in\X\times \R\mbox{ such that }f(x)>u\},\ f\in\FF\right\}}
\]
is a VC-class of subsets of $\X\times \R$ with dimension $d$.
\end{itemize}
\end{defi}
These two notions are related to that of a weak VC-major class in the following way.
\begin{prop}\label{prop-VC}
If $\FF$ is either VC-major or VC-subgraph with dimension $d$ then $\FF$ is 
weak VC-major with dimension not larger than $d$.
\end{prop}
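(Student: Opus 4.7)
The plan is to treat the two cases (VC-major and VC-subgraph) separately, and in each one to show directly that for every real number $u$ the slice class $\CC_{u}(\FF)$ defined by \eref{Eq-Cu} cannot shatter more than $d$ points of $\X$. Once this is established for every $u$, the definition of weak VC-major dimension yields the conclusion.

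The VC-major case is almost immediate. By the very definition of $\CC(\FF)$ one has the inclusion $\CC_{u}(\FF)\subset \CC(\FF)$ for every $u\in\R$. Since any subcollection of a VC-class with dimension $d$ is itself a VC-class with dimension at most $d$ (a subset cannot be shattered by a smaller family if it was not shattered by the larger one), we obtain that $\CC_{u}(\FF)$ has dimension at most $d$, uniformly in $u$.

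The VC-subgraph case requires a simple lifting argument, and this is really the only step where one has to do anything. Fix $u\in\R$ and suppose that the slice class $\CC_{u}(\FF)$ shatters a finite set $\{x_{1},\dots,x_{k}\}\subset\X$. By definition, for every $S\subset\{x_{1},\dots,x_{k}\}$ there exists $f_{S}\in\FF$ such that $\{i : f_{S}(x_{i})>u\}=\{i : x_{i}\in S\}$. Now lift each $x_{i}$ to the point $(x_{i},u)\in\X\times\R$ and consider the subgraph $\{(x,t)\in\X\times\R : f_{S}(x)>t\}\in \CC_{\times}(\FF)$: it contains $(x_{i},u)$ if and only if $f_{S}(x_{i})>u$, hence if and only if $x_{i}\in S$. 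Therefore $\CC_{\times}(\FF)$ shatters the $k$ lifted points $\{(x_{1},u),\dots,(x_{k},u)\}$, and the VC-subgraph hypothesis forces $k\le d$. Hence $\CC_{u}(\FF)$ has dimension at most $d$, uniformly in $u\in\R$.

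In both cases we have shown that for every $u$ the class $\CC_{u}(\FF)$ is VC with dimension $\le d$, which is exactly what is needed for $\FF$ to be weak VC-major with dimension at most $d$. The only subtlety is the bookkeeping for the VC-subgraph case, namely recognising that horizontal slices at height $u$ of the subgraphs are precisely the super-level sets $\{f>u\}$; no combinatorial estimate (Sauer's lemma, entropy bounds, etc.) is needed here.
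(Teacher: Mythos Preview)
Your proof is correct and follows essentially the same approach as the paper: for the VC-major case you use the inclusion $\CC_{u}(\FF)\subset\CC(\FF)$, and for the VC-subgraph case you lift a shattered set $\{x_{1},\ldots,x_{k}\}$ to $\{(x_{1},u),\ldots,(x_{k},u)\}$ and observe that the subgraphs shatter it. There is no meaningful difference between your argument and the paper's.
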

An alternative definition for a weak VC-major class can be obtained from the following proposition.
\begin{prop}\label{prop-def2}
{The class $\FF$ is weak VC-major with dimension $d$ if and only if  $d$ is the smallest integer $k\in\N$ such that, for all $u\in\R$, the class
\[
\CC_{u}^{+}(\FF)=\left\{\strut\{x\in\X\mbox{ such that }f(x)\ge u\},\ f\in\FF\right\}
\]
is a VC-class of subsets of $\X$ with dimension not larger than $k$.
}\end{prop}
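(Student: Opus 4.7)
The plan is to prove both directions by showing that the VC dimensions of the two families $\{\CC_u(\FF)\}_{u\in\R}$ and $\{\CC_u^+(\FF)\}_{u\in\R}$ (taken as suprema over $u$) coincide. Concretely, I would prove the following two implications:

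\textbf{(i)} If $\CC_u^+(\FF)$ shatters a finite set $Z=\{x_1,\ldots,x_m\}\subset\X$ for some $u\in\R$, then there exists $u'\in\R$ such that $\CC_{u'}(\FF)$ shatters $Z$.

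\textbf{(ii)} If $\CC_u(\FF)$ shatters $Z$ for some $u\in\R$, then there exists $u'\in\R$ such that $\CC_{u'}^+(\FF)$ shatters $Z$.

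Since the class of finite sets that can be shattered by $\bigcup_u\CC_u(\FF)$ coincides, up to the choice of threshold $u$, with those shattered by $\bigcup_u\CC_u^+(\FF)$, these two statements together yield the equality of the two associated dimensions, which is exactly the claim of the proposition.

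For \textbf{(i)}, I would argue as follows. For each subset $A\subseteq Z$, pick $f_A\in\FF$ with $\{x_i\in Z:f_A(x_i)\ge u\}=A$, so $f_A(x_i)\ge u$ for $i\in A$ and $f_A(x_i)<u$ for $i\notin A$. Set
\[
\delta=\min\bigl\{\,u-f_A(x_i)\;:\;A\subseteq Z,\ i\in Z\setminus A\,\bigr\},
\]
with the convention that this minimum is $+\infty$ when $A=Z$. Since $Z$ has finitely many subsets and each contributes finitely many positive numbers, we have $\delta>0$. Taking $u'=u-\delta/2$, one checks that $\{x_i\in Z:f_A(x_i)>u'\}=A$ for every $A$, so $\CC_{u'}(\FF)$ shatters $Z$. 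For \textbf{(ii)}, the symmetric construction works: pick $f_A$ realizing $\{f_A>u\}\cap Z=A$ and set $\delta=\min\{f_A(x_i)-u:A\subseteq Z,\ i\in A\}>0$, then $u'=u+\delta/2$ makes $\CC_{u'}^+(\FF)$ shatter $Z$.

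The argument is essentially elementary and I do not expect any serious obstacle; the only mild care is in the edge cases $A=\varnothing$ in \textbf{(ii)} and $A=Z$ in \textbf{(i)}, where one of the two sets $A$ or $Z\setminus A$ is empty and the corresponding min should be understood as $+\infty$, so those cases impose no constraint on $\delta$. The finiteness of $Z$ is what drives the whole argument, allowing strict inequalities to be separated from non-strict ones by a positive amount.
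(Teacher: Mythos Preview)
Your proof is correct. The paper takes a different route: it writes $\1_{\{f\ge u\}}=\lim_{m\to\infty}\1_{\{f>u-1/m\}}$ (and symmetrically $\1_{\{f>u\}}=\lim_{m\to\infty}\1_{\{f\ge u+1/m\}}$) and then invokes Lemma~2.6.17(vi) of van der Vaart and Wellner on the stability of VC dimension under sequential closure. Your argument is more elementary and self-contained: by working with a fixed finite shattered set $Z$ you replace the limiting procedure by a single explicit shift of the threshold, $u'=u\pm\delta/2$, which requires no outside lemma. The two approaches are close in spirit --- the proof of the van der Vaart--Wellner lemma ultimately performs the same finite-set extraction --- but your version has the advantage of making the role of the threshold transparent and of sidestepping a slight imprecision in the paper's phrasing (the approximating sets $\{f>u-1/m\}$ belong to $\CC_{u-1/m}$, not to $\CC_u$, so the ``sequential closure of $\CC_u$'' description is not literally what is being used).
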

The following permanence properties can be established for weak VC-major classes.
\begin{prop}\label{prop-TF}
Let $\FF$ be weak VC-major with dimension $d$. Then for any monotone function $F$, $F\circ \FF=
\{F\circ f,\ f\in\FF\}$ is weak VC-major with dimension not larger than $d$. In particular $\{-f,\ f\in\FF\}$ 
and $\{f\vee 0,\ f\in\FF\}$ are weak VC-major with respective dimensions not larger than $d$.
\end{prop}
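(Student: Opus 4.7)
The plan is to reduce the level sets of $F\circ f$ to level sets of $f$ itself, using only the monotonicity of $F$, and then invoke Proposition~\ref{prop-def2} to control the VC-dimension.

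First I would split the argument according to whether $F$ is nondecreasing or nonincreasing. Fix $u\in\R$. If $F$ is nondecreasing, the set $\{y\in\R:F(y)>u\}$ is either empty, or all of $\R$, or of the form $(v,+\infty)$, or of the form $[v,+\infty)$, for some $v=v(u,F)\in\R$ that depends on $u$ and $F$ but \emph{not} on $f$. Consequently, for every $f\in\FF$,
\[
\{x\in\X:F(f(x))>u\}\in\bigl\{\varnothing,\;\X,\;\{f>v\},\;\{f\ge v\}\bigr\}.
\]
Hence $\CC_u(F\circ\FF)$ is contained in $\{\varnothing,\X\}\cup\CC_v(\FF)\cup\CC_v^{+}(\FF)$ for the appropriate $v$. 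By the definition of a weak VC-major class and by Proposition~\ref{prop-def2}, both $\CC_v(\FF)$ and $\CC_v^{+}(\FF)$ are VC with dimension at most $d$, so their union is also VC with dimension at most $d$ (adding the two trivial sets $\varnothing,\X$ does not increase the dimension). Thus $\CC_u(F\circ\FF)$ is VC with dimension at most $d$.

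If $F$ is nonincreasing, the same analysis gives $\{y:F(y)>u\}\in\{\varnothing,\R,(-\infty,v),(-\infty,v]\}$, so $\{x:F(f(x))>u\}$ is the complement of a set in $\CC_v(\FF)$ or $\CC_v^{+}(\FF)$ (or is $\varnothing$ or $\X$). Since complementation preserves VC-dimension — any subset $A$ of a would-be shattered set $Z$ can be realized as $\complementaire{C}\cap Z$ iff $Z\setminus A$ can be realized as $C\cap Z$ — the same bound holds. This applies to every $u\in\R$, so $F\circ\FF$ is weak VC-major with dimension at most $d$.

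For the two explicit instances, $F(x)=-x$ is nonincreasing and $F(x)=x\vee 0$ is nondecreasing, so both conclusions follow from the general statement. The argument is essentially bookkeeping; the only point requiring a little care is the handling of both strict and non-strict inequalities in the description of $\{y:F(y)>u\}$, which is precisely why Proposition~\ref{prop-def2} is needed to give us access to $\CC_v^{+}(\FF)$ alongside $\CC_v(\FF)$.
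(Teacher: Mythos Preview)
Your approach is the right one, but there is a genuine false step. You assert that since $\CC_v(\FF)$ and $\CC_v^{+}(\FF)$ each have VC dimension at most $d$, ``their union is also VC with dimension at most $d$''. This is not true in general, and it fails even in the present setting. Take $\X=\{a,b\}$ and $\FF=\{f_1,f_2,f_3\}$ with $f_1=(1,0)$, $f_2=(0,1)$, $f_3=(-1,-1)$ (values at $a,b$). One checks easily that $\CC_u(\FF)$ has VC dimension at most $1$ for every $u$, so $\FF$ is weak VC-major with $d=1$. Yet $\CC_0(\FF)=\{\{a\},\{b\},\varnothing\}$ and $\CC_0^{+}(\FF)=\{\{a,b\},\varnothing\}$, whose union is all of $\PP(\{a,b\})$ and hence has VC dimension $2>d$.

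The fix is immediate once you notice something you almost wrote but did not exploit: not only $v$, but also the \emph{type} of the interval $\{y:F(y)>u\}$ (empty, all of $\R$, open ray, or closed ray) is determined by $F$ and $u$ alone, independently of $f$. Therefore $\CC_u(F\circ\FF)$ is contained in \emph{exactly one} of $\{\varnothing\}$, $\{\X\}$, $\CC_v(\FF)$, $\CC_v^{+}(\FF)$, never in a nontrivial union of them, and each of these individually has VC dimension at most $d$ by the weak VC-major hypothesis and Proposition~\ref{prop-def2}. The nonincreasing case then goes through as you wrote. For comparison, the paper sidesteps the whole issue by working only on a finite shattered set $\{x_1,\dots,x_k\}$ and a finite list $f_1,\dots,f_m$: it sets $s=\max\{f_j(x_i):F(f_j(x_i))\le u\}$ and checks that $F(f_j(x_i))>u$ iff $f_j(x_i)>s$ on these finitely many values, so $\CC_s(\FF)$ already shatters $\{x_1,\dots,x_k\}$. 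That argument never needs $\CC_v^{+}$ or Proposition~\ref{prop-def2}, at the cost of being slightly less transparent about where monotonicity enters.
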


\subsection{The main results}\label{ssect-main}
Let us first introduce some combinatoric quantities. For $u\in~(0,1)$, $\CC_{u}(\FF)$ defined by (\ref{Eq-Cu}) and $\gX=(X_{1},\ldots,X_{n})$ let  
\begin{equation}\label{eq-defgu}
\EE_{u}(\gX)=\{\{i,\ X_{i}\in C\},\ C\in\CC_{u}(\FF)\}\quad\mbox{and}\quad\Gamma_{u}=\E\cro{\log(2\ab{\EE_{u}(\gX)})}.
\end{equation}
Since $\CC_{u}(\FF)\neq \varnothing$ and $\EE_{u}(\gX)\subset \PP(\{1,\ldots,n\})$, $1\le \ab{\EE_{u}(\gX)}\le 2^{n}$. Hence, $\Gamma_{u}$ is well defined and satisfies $\log 2\le \Gamma_{u}\le(n+1)\log 2$ for all $u\in(0,1)$. The upper bound $(n+1)\log 2$ can be improved as follows when $\FF$ is weak VC-major with dimension $d$. For $u\in (0,1)$, the class $\CC_{u}(\FF)$ being VC with dimension not larger than $d$, a classical lemma of Sauer~\citeyearpar{MR0307902} (see also van der Vaart and Wellner~\citeyearpar{MR1385671}, Section 2.6.3 p.136) asserts that $\ab{\EE_{u}(\gX)}\le\sum_{j=0}^{d\wedge n}\binom{n}{j}$ for all $n\ge 1$, therefore $\Gamma_{u}\le \overline \Gamma_n(d)$ for all $u\in (0,1)$ with
\begin{equation}\label{eq-G}
\overline\Gamma_n(d)=\log\cro{2\sum_{j=0}^{d\wedge n}\binom{n}{j}}.
\end{equation}
Using the classical inequality $\sum_{j=0}^{k}\binom{n}{j}\le(en/k)^{k}$ for $k\le n$ 
(see Barron, Birg\'e and Massart~\citeyearpar{MR1679028}, Lemma~6), a convenient upper bound for $\overline \Gamma_n(d)$ when $d\ge 1$ is given by
\[
\overline \Gamma_n(d)\le \log 2+(d\wedge n)\log\pa{en\over d\wedge n}\le (d\wedge n)\log\pa{2en\over d\wedge n}.
\]
Since for $d\le n$, $\overline \Gamma_n(d)\ge\log\binom{n}{d}$, it is not difficult to see that 
\[
\overline \Gamma_n(d)=d\log n(1+o(1))\quad\mbox{when}\quad n\rightarrow+\infty.
\]
The following result holds.
\begin{thm}\label{casind}
If $\FF$ is a class of functions with values in $[0,1]$ and 
\begin{equation}\label{def-sigma}
\sigma=\sup_{f\in\FF}\left[\frac{1}{n}\sum_{i=1}^{n}\E\cro{f^{2}(X_{i})}\right]^{1/2},
\end{equation}
then, 
%\begin{equation}\label{eq-vrai}
%{1\over 2}\E\cro{Z(\FF)}\le \E\cro{\overline Z(\FF)}\le \sqrt{2n}\sigma\cro{{1\over \sigma}\int_{0}^{\sigma}\sqrt{\Gamma_{u}}du+\int_{\sigma}^{1}{\sqrt{\Gamma_{u}}\over u}du}+4\int_{0}^{1}\Gamma_{u}du
%\end{equation}
\begin{equation}\label{eq-vrai}
\E\cro{Z(\FF)}\le  2\sqrt{2n}\,\sigma\cro{{1\over \sigma}\int_{0}^{\sigma}\sqrt{\Gamma_{u}}du+\int_{\sigma}^{1}{\sqrt{\Gamma_{u}}\over u}du}+8\int_{0}^{1}\Gamma_{u}du,
\end{equation}
with $\Gamma_{u}$ defined by~\eref{eq-defgu}. In particular, if $\FF$ is weak VC-major with dimension $d$,
\begin{equation}\label{eq-vrai2}
\E\cro{Z(\FF)}\le 2\sqrt{\overline \Gamma_n(d)}\cro{\sigma\log\pa{e\over \sigma}\sqrt{2n}+4\sqrt{\overline \Gamma_n(d)}}
\end{equation}
with $\overline\Gamma_n(d)$ given by~\eref{eq-G}. 
\end{thm}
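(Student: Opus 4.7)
The plan is to combine the symmetrization of Lemma~\ref{L-Sym} with a layer-cake decomposition that converts the empirical process over $\FF$ into a continuum of empirical processes indexed by the level-set classes $\CC_u(\FF)$, and to then run a self-bounding argument at each level $u$. By Lemma~\ref{L-Sym}, $\E\cro{Z(\FF)}\le 2\E\cro{\overline Z(\FF)}$, so it suffices to control $\E\cro{\overline Z(\FF)}$. The identity $f(x)=\int_0^1 \1_{f(x)>u}\,du$ (valid because $f\in[0,1]$) rewrites $\sum_i \eps_i f(X_i)$ as $\int_0^1 \sum_i \eps_i \1_{X_i\in\{f>u\}}\,du$. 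Taking the absolute value, supping over $\FF$ and applying Fubini yields
\begin{equation*}
\E\cro{\overline Z(\FF)}\le \int_0^1 M_u\,du, \qquad M_u:=\E\cro{\sup_{C\in\CC_u(\FF)}\ab{\sum_{i=1}^n \eps_i \1_{X_i\in C}}}.
\end{equation*}

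For each fixed $u\in(0,1)$ I bound $M_u$ as follows. Conditional on $\gX$, the vector $(\1_{X_i\in C})_{1\le i\le n}$ takes at most $\ab{\EE_u(\gX)}$ distinct values as $C$ varies, so the inner supremum is a maximum over at most $\ab{\EE_u(\gX)}$ Rademacher sums. Each such sum is conditionally sub-Gaussian with parameter $V_C(\gX):=\sum_i \1_{X_i\in C}$, so the standard maximal inequality for sub-Gaussian variables gives
\begin{equation*}
\E\cro{\sup_C\ab{\sum_i \eps_i \1_{X_i\in C}}\telque \gX}\le \sqrt{2\, V^\star_u(\gX)\log(2\ab{\EE_u(\gX)})},
\end{equation*}
with $V^\star_u(\gX):=\sup_C V_C(\gX)$. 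Taking expectations and decoupling via the Cauchy--Schwarz inequality $\E[\sqrt{AB}]\le \sqrt{\E[A]\E[B]}$ (valid for $A,B\ge 0$) yields $M_u\le \sqrt{2\E[V^\star_u(\gX)]\,\Gamma_u}$. A self-bounding step closes the recursion: writing $V_C=\sum_i\P(X_i\in C)+(V_C-\E V_C)$ and supping over $C$ gives $V^\star_u(\gX)\le s_u^2+Z(\FF_u)$, where $\FF_u=\{\1_C:\,C\in\CC_u(\FF)\}$ and $s_u^2:=\sup_{C\in\CC_u(\FF)}\sum_i\P(X_i\in C)$. Markov applied to $f(X_i)^2$ together with the trivial bound $\1_{f(X_i)>u}\le 1$ yields $s_u^2\le n\wedge(n\sigma^2/u^2)$, and a second application of Lemma~\ref{L-Sym} gives $\E[Z(\FF_u)]\le 2M_u$. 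Substituting produces the quadratic inequality $M_u^2\le 2\Gamma_u s_u^2+4\Gamma_u M_u$, which solves to
\begin{equation*}
M_u\le \sqrt{2 s_u^2\Gamma_u}+4\Gamma_u.
\end{equation*}

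It then suffices to integrate this bound over $u\in[0,1]$, splitting at $u=\sigma$ to match the two regimes $s_u^2\le n$ (for $u\le\sigma$) and $s_u^2\le n\sigma^2/u^2$ (for $u>\sigma$). The two resulting square-root integrals are exactly the bracketed terms in~\eref{eq-vrai}, the $4\Gamma_u$ term integrates to the last term, and the initial factor $2$ from symmetrization yields~\eref{eq-vrai}. For~\eref{eq-vrai2}, Sauer's lemma gives $\Gamma_u\le \overline\Gamma_n(d)$ uniformly in $u\in(0,1)$, hence the three integrals in~\eref{eq-vrai} are majorized by $\sigma\sqrt{\overline\Gamma_n(d)}$, $\sqrt{\overline\Gamma_n(d)}\log(1/\sigma)$ and $\overline\Gamma_n(d)$ respectively, which combine (using $1+\log(1/\sigma)=\log(e/\sigma)$ and factoring $\sqrt{\overline\Gamma_n(d)}$) into the stated form.

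The main obstacle will be the self-bounding step: one must couple the random complexity $\ab{\EE_u(\gX)}$ to the random effective envelope $V^\star_u(\gX)$ through Cauchy--Schwarz rather than any cruder tool, and then close the quadratic inequality for $M_u$ with constants tight enough to preserve the factor $\sqrt{2}$ and the additive $4\Gamma_u$ appearing in~\eref{eq-vrai}. Measurability of the various suprema is absorbed by the convention, recalled at the start of Section~\ref{sect-main}, that they are computed over countable subclasses.
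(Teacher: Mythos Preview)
Your argument is correct and matches the paper's proof essentially line for line: the paper first isolates the indicator-class bound $M_u\le\sqrt{2s_u^2\Gamma_u}+4\Gamma_u$ as a separate result (Theorem~\ref{thm-set}), proved via the same sub-Gaussian maximal inequality, Cauchy--Schwarz decoupling, and self-bounding quadratic, and then combines it with the layer-cake representation and the Markov bound on $s_u^2$ exactly as you do. Your write-up simply inlines that lemma into the main argument.
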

In view of analysing~\eref{eq-vrai2}, let $\GG$ be a weak VC-major class with dimension $d\ge 1$ consisting of functions with values in $[0,1]$, $\sigma\in [0,1]$ and 
\begin{equation}\label{def-F}
\FF=\GG(\sigma)=\ac{f\in\GG,\ \sum_{i=1}^{n}\E\cro{f^{2}(X_{i})}\le n\sigma^{2}}.
\end{equation}
As a subset of $\GG$, $\FF$ is weak VC-major with dimension not larger than $d$ and we may therefore apply our Theorem~\ref{casind} to bound $\E\cro{Z(\FF)}$ from above. When $n$ is large enough, the right-hand side of~\eref{eq-vrai2} is of order $\sigma\log(e/\sigma)\sqrt{nd\log n}$ for $\sigma\ge \sqrt{d/(n\log n)}$ and is equivalent to $2\sigma\log(e/\sigma)\sqrt{2nd\log n}$ when $\sigma$ is fixed and $n$ tends to infinity. In the opposite situation where $\sigma<\sqrt{d/(n\log n)}$, \eref{eq-vrai2} is of order $d\log n$.

For the sake of comparison with the results of  Gin\'e and Koltchinskii~\citeyearpar{MR2243881}, consider the case where the $X_{i}$ are i.i.d.\ with a nonatomic distribution $P$ on $[0,1]$, $\GG$ is the set of nondecreasing functions $f$ from $[0,1]$ into $[0,1]$ and $\FF=\GG(\sigma)$ is given by~\eref{def-F}. The class $\FF$ is weak VC-major with dimension $1$ because the elements of $\CC_{u}(\FF)$ are all of the form $(a,1]$ or $[a,1]$ with $a\in [0,1]$ for all $u$ and such classes of intervals cannot shatter a set of two elements $\{x_{1},x_{2}\}$ with $0\le x_{1}<x_{2}\le 1$ (the subset $\{x_{1}\}$ cannot be picked up). Besides, $\overline \Gamma_n(1)=\log(2(n+1))$ and Theorem~\ref{casind} gives 
\begin{equation}\label{eq-consthm1}
\E\cro{Z(\FF)}\le 2\sigma\log(e/\sigma)\sqrt{2n\log(2(n+1))}+8\log(2(n+1)).
\end{equation}
For $\sigma<e^{-e}$, Gin\'e and Koltchinskii~\citeyearpar{MR2243881} (Example 3.8 p.1173) obtained an upper bound for $\E\cro{Z(\FF)}$ of order 
%beg
\begin{equation}
B(n,\sigma)=\sigma\sqrt{nL(\sigma)}+L(\sigma)+\sqrt{\log n}
\quad\mbox{with}\quad L(\sigma)=\cro{\log\pa{\sigma^{-1}}}^{3/2}\log\log\pa{\sigma^{-1}}.
\label{Eq-GK}
\end{equation}
%end
If $\sigma\ge \sqrt{\log n/n}$,  then $B(n,\sigma)\ge \sqrt{n}\sigma$ while 
$B(n,\sigma)\ge \sqrt{\log n}$ for $\sigma\le \sqrt{\log n/n}$. In any case, $B(n,\sigma)\ge 
\max\{\sqrt{n}\sigma, \sqrt{\log n}\}$, which shows that the bound (\ref{Eq-GK}) can only improve ours by some power of $\log n$. 

Gin\'e and Koltchinskii's bound is based on the fact that the class $\FF$ possesses an envelop function $F=\sup_{f\in \FF}f$ whose $\IL_{2}(P)$-norm equals $\sigma[\log(e/\sigma^{2})]^{1/2}$ and is therefore small when $\sigma$ is small.  This property is no longer satisfied for the class $\FF'=\{f(\cdot-t)\1_{[0,1]}(\cdot),\ t\in\R,\ f\in \FF\}$ for which $\sup_{f\in \FF'}f=1$. The elements of $\FF'$ also satisfy $\E[f^{2}(X_{1})]\le \sigma^{2}$ when the $X_{i}$ are uniformly distributed on $[0,1]$ for instance, however, while Gin\'e and Koltchinskii's trick fails for the class $\FF'$, our Theorem~\ref{casind} still applies: since $\FF'$ is weak-VC major with dimension not larger than 2 and $\overline \Gamma_n(2)\le 2\overline \Gamma_n(1)$, $\E\cro{Z(\FF')}$ is actually not larger than twice the right-hand side of~\eref{eq-consthm1}. 

When $\sigma^2$ is large enough compared to $\overline \Gamma_n(d)/n$, inequality~\eref{eq-vrai2} can be further improved as we shall see below. Let 
\begin{equation}\label{def-Ha}
\overline H(x)=x\sqrt{d\cro{5+\log\pa{1\over x}}}\ \ \ \mbox{for $x\in(0,1]$}\ \ \mbox{and}\ \ a=\pa{32\sqrt{\overline \Gamma_n(d)\over n}}\wedge 1.
\end{equation}
Note that $a=32\sqrt{(d\log n)/n}(1+o(1))$ when $n$ tends to infinity.
\begin{thm}\label{casind2}
If $\FF$ is a weak VC-major class with dimension not larger than $d\ge 1$, of functions with values in $[0,1]$,
\begin{equation}\label{eq-ZF2}
\E\cro{Z(\FF)}\le 2\E\cro{\overline Z(\FF)}\le 10\sqrt{n}B(\sigma)
\end{equation}
where  $\sigma$ is given by~\eref{def-sigma} and
\begin{equation}\label{def-B}
B(\sigma)=\left\{
\begin{array}{cl}
&\overline H\cro{\sigma\log\pa{1/\sigma}+\sigma}\ \ \mbox{for $\sigma\ge a$}\\
& \\
& \overline H\cro{\sigma\log\pa{1/a}+a}\ \ \mbox{for $\sigma< a$}
\end{array}
\right..
\end{equation}
\end{thm}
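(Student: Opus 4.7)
The first inequality is just the symmetrization Lemma~\ref{L-Sym}, so the work is to bound $\E[\overline Z(\FF)]$. My plan is to reduce the problem to one over indicator functions by using the layer-cake identity: since $f$ takes values in $[0,1]$,
\[
\sum_{i=1}^n\eps_i f(X_i)=\int_0^1\sum_{i=1}^n \eps_i\1_{\{X_i\in \{f>u\}\}}\,du,
\]
so by Fubini and the triangle inequality
\[
\overline Z(\FF)\le \int_0^1\sup_{C\in\CC_u(\FF)}\Bigl|\sum_{i=1}^n\eps_i\1_{\{X_i\in C\}}\Bigr|\,du.
\]
For each fixed $u$ the inner supremum is a Rademacher process indexed by the VC class $\CC_u(\FF)$ of dimension at most $d$, and the variance constraint gives the key side-information: for any $C=\{f>u\}$,
\[
\sum_{i=1}^n\P(X_i\in C)\le u^{-2}\sum_{i=1}^n\E[f^2(X_i)]\le n\sigma^2/u^2,
\]
so the mass of the indicator class is small as soon as $u\gtrsim \sigma$.

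The next step is to plug in a sharp bound for Rademacher sums over a VC class of sets with bounded expected mass, which is precisely the type of inequality the excerpt announces as an intermediate result of Section~\ref{sect-pm}. The expected bound has the shape
\[
\E\Bigl[\sup_{C\in\CC_u(\FF)}\Bigl|\sum_i\eps_i\1_{X_i\in C}\Bigr|\Bigr]\lesssim \sqrt{d\,m(u)\log(n/m(u))}\;+\;d\log(n/m(u)),
\]
with $m(u)=n\min\{1,\sigma^2/u^2\}$. This is the refinement that lets the uniform factor $\sqrt{\overline\Gamma_n(d)}\sim\sqrt{d\log n}$ used in Theorem~\ref{casind} be replaced by the smaller $\sqrt{d\log(1/u)}$ once $u$ exceeds $\sigma$, which is where all the improvement comes from.

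I then integrate in $u$. The dominant square-root term yields
\[
\int_0^1\sqrt{m(u)}\,\sqrt{d\log(n/m(u))}\,du\;\asymp\;\sqrt{n}\int_0^1\min(1,\sigma/u)\sqrt{d\log(\cdots)}\,du,
\]
and the elementary computation $\int_0^1\min(1,\sigma/u)\,du=\sigma+\sigma\log(1/\sigma)$ produces exactly the argument $\sigma\log(1/\sigma)+\sigma$ appearing inside $\overline H$. Pushing the outer log inside the integral (using concavity of $x\mapsto x\sqrt{5+\log(1/x)}$, which is why the ``$5$'' appears in $\overline H$, and a Jensen-type estimate) gives the bound in the form $\overline H(\sigma\log(1/\sigma)+\sigma)$. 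The linear remainder $d\log(n/m(u))$ integrates to a lower-order term absorbed in $\overline H$.

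For the second branch of $B(\sigma)$: when $\sigma<a$, the mass estimate $m(u)\le n\sigma^2/u^2$ is useless for $u\le a$ (it may exceed $n$ or fail to beat the $d\log n$ ``floor''), so I would split the integral at $u=a$, using the trivial bound $m(u)\le n$ on $[0,a]$ and the mass estimate on $[a,1]$. This produces the same computation with $a$ playing the role of $\sigma$ in the lower range of integration, giving the form $\overline H(\sigma\log(1/a)+a)$. The main obstacle I expect is the intermediate sharp bound for the Rademacher process over a VC class with bounded mass: extracting the log factor $\log(n/m)$ rather than the cruder $\log n$ (as would come from a direct use of Sauer's lemma plus the maximal inequality) requires a careful chaining argument based on Haussler-type packing estimates; once this is established, the rest is essentially a bookkeeping exercise with the two regimes in $\sigma$ vs.\ $a$ and the explicit numerical constant $10$.
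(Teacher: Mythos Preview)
Your plan is correct and matches the paper's approach: layer-cake reduction to indicators, then for each level $u$ apply the sharp Haussler-based chaining bound for VC classes of sets (this is exactly Proposition~\ref{cor-set2}, which gives $\E[\overline Z]\le 5\sqrt{n}\,\overline H(\sigma_u\vee a)$), and finally integrate in $u$ using the concavity of $\overline H$ and Jensen. The only cosmetic difference is that the paper bakes the threshold $a$ into the intermediate bound via $\overline H(\sigma_u\vee a)$, so a single Jensen step yields $\overline H\bigl(\int_0^1[(\sigma/u)\wedge1]\vee a\,du\bigr)=\overline H(\sigma\vee a-\sigma\log(\sigma\vee a))$ and both branches of $B(\sigma)$ fall out at once, rather than splitting the integral at $u=a$ as you propose.
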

In both cases, we may note that 
\[
B(\sigma)\le \overline H\cro{(\sigma\vee a)\log\pa{e\over \sigma\vee a}}\ \ \mbox{for all $\sigma\in [0,1]$.}
\]

When $\FF=\GG(\sigma)$ is given by~\eref{def-F} and $n$ is large, the right-hand side of~\eref{eq-ZF2} is of order $\sigma\log^{3/2}(e/\sigma)\sqrt{nd}$
when $\sigma\ge a$ and improves~\eref{eq-vrai2} when $\log (1/\sigma)$ is small enough compared to $\log n$. When $\sigma< a$, two situations may occur. Either $\sigma\ge \sqrt{d/(n\log n)}$ and 
the right-hand sides of~\eref{eq-ZF2} and~\eref{eq-vrai2} are both of order $\sigma\log(e/\sigma)\sqrt{nd\log n}$, or $\sigma< \sqrt{d/(n\log n)}$ and the right-hand side of~\eref{eq-vrai2}, which is of order $d\log n$ improves that of~\eref{eq-ZF2} which is of order $d\log^{3/2}n$.

When the elements of $\FF$ take their values in $[-b,b]$ for some $b>0$, one should rather use the following result.
\begin{cor}\label{cas-gene}
Assume that $\FF$ is a weak VC-major class with dimension not larger than $d\ge 1$ consisting of functions with values in $[-b,b]$ for some $b>0$. Then,
\[
4^{-1}\E\cro{Z(\FF)}\le \cro{\sigma\log\pa{eb\over \sigma}\sqrt{2n\overline\Gamma_n(d)}+4b\overline \Gamma_n(d)}\wedge\cro{5\sqrt{n}bB(\sigma b^{-1})}.
\]
with $\overline \Gamma_n(d)$ given by~\eref{eq-G}, $\sigma$ by~\eref{def-sigma} and $B(\cdot)$ by~\eref{def-B}.
\end{cor}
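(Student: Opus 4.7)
The plan is to reduce the $[-b,b]$-valued case to the $[0,1]$-valued case handled by Theorems~\ref{casind} and~\ref{casind2}. For each $f\in\FF$, decompose $f=f^{+}-f^{-}$ with $f^{+}=f\vee 0$ and $f^{-}=(-f)\vee 0$, and introduce the two classes
\[
\FF^{+}=\{f\vee 0,\ f\in\FF\}\quad\text{and}\quad \FF^{-}=\{(-f)\vee 0,\ f\in\FF\}.
\]
By Proposition~\ref{prop-TF} applied first to the monotone map $x\mapsto-x$ and then to $x\mapsto x\vee0$, both $\FF^{+}$ and $\FF^{-}$ are weak VC-major with dimension at most $d$, and their elements take values in $[0,b]$.

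Next, I would use the triangle inequality for the centered sums: since $f(X_{i})-\E[f(X_{i})]=(f^{+}(X_{i})-\E[f^{+}(X_{i})])-(f^{-}(X_{i})-\E[f^{-}(X_{i})])$, passing to the supremum over countable subfamilies gives
\[
\E[Z(\FF)]\le \E[Z(\FF^{+})]+\E[Z(\FF^{-})].
\]
Rescaling, $\FF^{\pm}/b=\{b^{-1}f,\ f\in\FF^{\pm}\}$ are weak VC-major with dimension $\le d$, have values in $[0,1]$, and satisfy $Z(\FF^{\pm})=b\,Z(\FF^{\pm}/b)$. The key monotonicity observation is that $(f^{\pm})^{2}\le f^{2}$, so the parameter defined by~\eref{def-sigma} for $\FF^{\pm}/b$ is bounded by $\sigma/b$, where $\sigma$ is the parameter for $\FF$.

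Now I would apply Theorem~\ref{casind} to each of $\FF^{\pm}/b$ with variance parameter $\sigma/b\in[0,1]$. The resulting bound~\eref{eq-vrai2} (with $\log(e/(\sigma/b))=\log(eb/\sigma)$), multiplied by $b$ and summed over the two classes, yields
\[
\E[Z(\FF)]\le 4\cro{\sigma\log\pa{eb\over\sigma}\sqrt{2n\overline\Gamma_n(d)}+4b\overline\Gamma_n(d)},
\]
which is the first term in the stated minimum. Applying Theorem~\ref{casind2} to $\FF^{\pm}/b$ in the same way delivers $\E[Z(\FF^{\pm}/b)]\le 10\sqrt{n}\,B(\sigma/b)$, hence $\E[Z(\FF)]\le 20\sqrt{n}\,b\,B(\sigma b^{-1})$, which is the second term. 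Taking the minimum of the two bounds finishes the proof. There is no real obstacle here: the argument is essentially bookkeeping, and the only point requiring care is to verify that the rescaling and the positive/negative part decomposition both preserve the weak VC-major property with the same dimension $d$, which is exactly what Proposition~\ref{prop-TF} provides.
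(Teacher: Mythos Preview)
Your proof is correct and follows essentially the same route as the paper's: decompose into positive and negative parts via $f=f\vee0-(-f)\vee0$, use Proposition~\ref{prop-TF} to preserve the weak VC-major dimension, control the second-moment parameter by $(f^{\pm})^{2}\le f^{2}$, and then apply Theorems~\ref{casind} and~\ref{casind2} to each piece. The only cosmetic differences are that the paper normalises to $b=1$ by homogeneity at the outset (rather than rescaling explicitly) and performs the $\pm$ decomposition on the symmetrized quantity $\overline Z(\FF)$ rather than on $Z(\FF)$ itself; both choices lead to the same constants.
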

\begin{proof}
By homogeneity, we may assume that $b=1$. Since $\FF$ is weak VC-major with dimension $d$, 
$\FF_{+}=\{f\vee 0,\ f\in\FF\}$ and $\FF_{-}=\{(-f)\vee 0,\ f\in\FF\}$ are both weak VC-major with 
dimension not larger than $d$ by Proposition~\ref{prop-TF}. The elements of $\FF_{+}$ and $\FF_{-}$ take their values in $[0,1]$ and 
\[
\max_{\epsilon\in\{-,+\}}\,\sup_{f\in\FF_{\epsilon}}\frac{1}{n}\sum_{i=1}^{n}\E\cro{f^{2}(X_{i})}\le\sigma^{2}.
\]
We may therefore bound $\E\cro{\sup_{f\in\FF_{\epsilon}}\ab{\sum_{i=1}^{n}\eps_{i}f(X_{i})}}$ from above for $\epsilon\in\{-,+\}$ by applying Theorems~\ref{casind} and~\ref{casind2}. {To conclude we use that $f=f\vee 0-(-f)\vee 0$ for all $f\in\FF$ so that }
\[
\E\cro{\sup_{f\in\FF}\ab{\sum_{i=1}^{n}\eps_{i}f(X_{i})}}\le \E\cro{\sup_{f\in\FF_{+}}\ab{\sum_{i=1}^{n}\eps_{i}f(X_{i})}}+\E\cro{\sup_{f\in\FF_{-}}\ab{\sum_{i=1}^{n}\eps_{i}f(X_{i})}}.
\]
\end{proof}

Finally, we conclude this section with the special case of i.i.d.\ $X_{i}$ and a VC-major class $\FF$. It is then possible to replace the control of the $\IL_{2}(P)$-norm of the elements of $\FF$ by a control of their variances. More precisely, the following holds. 
\begin{cor}\label{VC-M}
Let $X_{1},\ldots,X_{n}$ be i.i.d random variables, $\FF$ a VC-major class of functions with values in $[-b,b]$ and 
\[
\sigma=\sup_{f\in\FF}\sqrt{\Var[f(X_{1})]}\in(0,b].
\]
If $\FF$ is a VC-major class with dimension not larger than $d\ge 1$,
\[
\E\cro{Z(\FF)}\le \cro{2\sigma\log\pa{2eb\over \sigma}\sqrt{2n\overline \Gamma_n(d)}+16b\overline \Gamma_n(d)}\wedge \cro{20\sqrt{n}bB\left(b\over\sigma\right)}
\]
where $\overline \Gamma_n(d)$ is given by~\eref{eq-G} and $B(\cdot)$ by~\eref{def-B}.
\end{cor}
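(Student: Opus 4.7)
The plan is a reduction to Corollary~\ref{cas-gene} by means of a centering step that is valid only under the i.i.d.\ assumption. For each $f\in\FF$, set $\tilde f=f-\E[f(X_1)]$ and $\tilde\FF=\{\tilde f:f\in\FF\}$. Since the $X_i$ are identically distributed,
\[
\sum_{i=1}^n(f(X_i)-\E[f(X_i)])=\sum_{i=1}^n\tilde f(X_i)=\sum_{i=1}^n(\tilde f(X_i)-\E[\tilde f(X_i)]),
\]
and therefore $Z(\FF)=Z(\tilde\FF)$ almost surely. It is thus enough to control $\E[Z(\tilde\FF)]$.

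Next, I would verify the hypotheses of Corollary~\ref{cas-gene} for $\tilde\FF$, with $b$ replaced by $2b$. The class $\tilde\FF$ is still VC-major with dimension at most $d$: every set $\{\tilde f>u\}=\{f>u+\E[f(X_1)]\}$ belongs to $\CC(\FF)$, which is VC of dimension $d$, so $\CC(\tilde\FF)\subseteq\CC(\FF)$ inherits the same VC bound. The functions in $\tilde\FF$ take values in $[-2b,2b]$, and
\[
\sup_{f\in\FF}\frac{1}{n}\sum_{i=1}^n\E[\tilde f^2(X_i)]=\sup_{f\in\FF}\Var[f(X_1)]=\sigma^2.
\]
Applying Corollary~\ref{cas-gene} then produces a bound whose main term is of order $\sigma\log(e\cdot 2b/\sigma)\sqrt{2n\overline\Gamma_n(d)}=\sigma\log(2eb/\sigma)\sqrt{2n\overline\Gamma_n(d)}$, with correction terms involving $b\overline\Gamma_n(d)$ and $\sqrt{n}\,b\,B(\sigma/(2b))$, exactly the combinations appearing in the statement.

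The main obstacle I expect is to obtain the sharp numerical constants ($2$, $16$, $20$) rather than their doubles: a literal plug-in into Corollary~\ref{cas-gene} produces bounds that are twice as large as those claimed. This reflects the fact that Corollary~\ref{cas-gene} bundles a symmetrization step (Lemma~\ref{L-Sym}) together with a positive/negative decomposition of $f$, whereas in the present i.i.d.\ setting the centering step has already performed the symmetrization---it delivers the \emph{equality} $Z(\FF)=Z(\tilde\FF)$, not merely the two-sided inequality of Lemma~\ref{L-Sym}. To recover the missing factor $2$, I would revisit the proof of Corollary~\ref{cas-gene} on $\tilde\FF$ and apply Theorems~\ref{casind} and~\ref{casind2} directly to the scaled positive and negative parts $\tilde\FF_+/(2b)$ and $\tilde\FF_-/(2b)$ (both weak VC-major with dimension at most $d$ by Proposition~\ref{prop-TF}, $[0,1]$-valued, with second-moment parameter $\sigma/(2b)$) and combine them through the decomposition $\sum_i\tilde f(X_i)=\sum_i(\tilde f_+-\E\tilde f_+)(X_i)-\sum_i(\tilde f_--\E\tilde f_-)(X_i)$, which is valid because $\E[\tilde f_+]=\E[\tilde f_-]$, thereby bypassing Lemma~\ref{L-Sym} altogether.
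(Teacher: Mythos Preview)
Your approach is essentially the paper's own proof with a cosmetic rescaling: the paper sets $g_f=\tfrac12(f-\E[f(X_1)])$, so that $\GG=\{g_f:f\in\FF\}$ takes values in $[-b,b]$ with second-moment parameter $\sigma/2$, shows (exactly as you do, via $\{g_f>u\}=\{f>2u+\E[f(X_1)]\}\in\CC(\FF)$) that $\GG$ is weak VC-major with dimension at most $d$, and then invokes Corollary~\ref{cas-gene}. Your class $\tilde\FF$ equals $2\GG$, so the two routes produce identical numerical bounds.

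Your observation about the constants is correct, and it applies to the paper's argument as well: Corollary~\ref{cas-gene} applied to $\GG$ gives
\[
\E[Z(\GG)]\le\Bigl[2\sigma\log\!\pa{\tfrac{2eb}{\sigma}}\sqrt{2n\overline\Gamma_n(d)}+16b\overline\Gamma_n(d)\Bigr]\wedge\Bigl[20\sqrt{n}\,bB\!\pa{\tfrac{\sigma}{2b}}\Bigr],
\]
but since $Z(\GG)=\tfrac12 Z(\FF)$ one must still multiply by $2$. The printed constants $2,16,20$ (and the argument $b/\sigma$ of $B$) therefore look like typos; neither your argument nor the paper's delivers them as stated.

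Your proposed remedy, however, does not recover the missing factor. The decomposition $\sum_i\tilde f(X_i)=\sum_i(\tilde f_+-\E\tilde f_+)(X_i)-\sum_i(\tilde f_--\E\tilde f_-)(X_i)$ is valid and yields $\E[Z(\FF)]\le \E[Z(\tilde\FF_+)]+\E[Z(\tilde\FF_-)]$, but applying Theorems~\ref{casind} and~\ref{casind2} to each piece does \emph{not} bypass Lemma~\ref{L-Sym}: those theorems already use the symmetrization $\E[Z]\le 2\E[\overline Z]$ internally. You therefore still pay a factor $2$ for the $\pm$ split and a second factor $2$ inside each theorem, exactly as in Corollary~\ref{cas-gene}. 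The identity $Z(\FF)=Z(\tilde\FF)$ is just the invariance of the centred empirical process under translation of $f$; it is unrelated to symmetrization and provides no savings.
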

%%%%%%%%%%%%%%%
\section{Proofs of Theorem~\ref{casind} and~\ref{casind2}}\label{sect-pm}
\subsection{Proof of Lemma~\ref{L-Sym}}
{Let $(X'_1,\ldots,X'_n)$ be an independent copy of $\gX=(X_1,\ldots,X_n)$. Then
\begin{eqnarray*}
\E\cro{\sup_{f\in\FF}\ab{\sum_{i=1}^{n}\pa{\strut f(X_{i})-\E\cro{f(X_{i})}}}}&=& \E\cro{\sup_{f\in\FF}\ab{\sum_{i=1}^{n}\pa{f(X_{i})-\E\cro{\left.f(X_{i}')\right|\gX}}}}\\&=&\E\cro{\sup_{f\in\FF}
\ab{\E\left[\left.\sum_{i=1}^{n}\pa{f(X_{i})-f(X_{i}')}\right|\gX\right]}}\\
&\le& \E\cro{\sup_{f\in\FF}\ab{\sum_{i=1}^{n}\pa{f(X_{i})-f(X_{i}')}}}.
\end{eqnarray*}
By symmetry $\sup_{f\in\FF}\ab{\sum_{i=1}^{n}\pa{f(X_{i})-f(X_{i}')}}$ and $\sup_{f\in\FF}
\ab{\sum_{i=1}^{n}\eps_{i}\pa{f(X_{i})-f(X_{i}')}}$ have the same distribution. Therefore 
\begin{eqnarray*}
\E\cro{\sup_{f\in\FF}\ab{\sum_{i=1}^{n}\pa{f(X_{i})-f(X_{i}')}}}&=& \E\cro{\sup_{f\in\FF}\ab{\sum_{i=1}^{n}\eps_{i}\pa{f(X_{i})-a_i-\left[f(X_{i}')-a_i\right]}}}\\
&\le & 2\E\cro{\sup_{f\in\FF}\ab{\sum_{i=1}^{n}\eps_{i}\pa{f(X_{i})-a_{i}}}}.
\end{eqnarray*}
}
\subsection{The particular case of a class $\FF$ of indicator functions}
We start with the following elementary situation.
\begin{lem}\label{lem-1}
For a finite and non-empty subset $T$ of $\R^{n}$ and $v^{2}=\max_{t\in T}\sum_{i=1}^{n}t_{i}^{2}$,
\begin{equation}\label{cas-set}
\E\cro{\sup_{t\in T}\ab{\sum_{i=1}^{n}\eps_{i}t_{i}}}\le\sqrt{2\log(2\ab{T})v^{2}}.
\end{equation}
\end{lem}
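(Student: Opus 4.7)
The plan is to use the classical sub-Gaussian maximal inequality applied to the finite collection of Rademacher sums $S_t = \sum_{i=1}^n \eps_i t_i$ indexed by $t \in T$, together with its negatives $-S_t$, to handle the absolute value.

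First I would establish the sub-Gaussian bound for a single Rademacher sum. For any $\lambda \in \R$ and any $t \in T$, using independence of the $\eps_i$ and the elementary inequality $\cosh(x) \le \exp(x^2/2)$ (Hoeffding's lemma for $\pm 1$ variables),
\[
\E\!\left[\exp\!\pa{\lambda \sum_{i=1}^n \eps_i t_i}\right] = \prod_{i=1}^n \cosh(\lambda t_i) \le \exp\!\pa{\frac{\lambda^2}{2}\sum_{i=1}^n t_i^2} \le \exp\!\pa{\frac{\lambda^2 v^2}{2}}.
\]
The same bound holds for $-\sum_i \eps_i t_i$ by symmetry.

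Next I would chain Jensen's inequality with a union bound. Setting $M = \sup_{t\in T}\ab{\sum_{i=1}^n \eps_i t_i}$, for $\lambda>0$,
\[
\exp\!\pa{\lambda \E[M]} \le \E[\exp(\lambda M)] \le \sum_{t\in T}\pa{\E[e^{\lambda S_t}] + \E[e^{-\lambda S_t}]} \le 2\ab{T}\exp\!\pa{\frac{\lambda^2 v^2}{2}},
\]
where the first step uses Jensen applied to the convex function $x\mapsto e^{\lambda x}$, and the second uses $e^{\lambda|x|} \le e^{\lambda x} + e^{-\lambda x}$ together with the finiteness of $T$ to swap supremum and sum. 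Taking logarithms gives
\[
\E[M] \le \frac{\log(2\ab{T})}{\lambda} + \frac{\lambda v^2}{2}.
\]

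Finally I would optimize the free parameter. Minimizing the right-hand side over $\lambda>0$ yields $\lambda = \sqrt{2\log(2\ab{T})/v^2}$ (which is well defined since $T$ is non-empty so $\log(2\ab{T})\ge \log 2 > 0$, and we may assume $v>0$, the case $v=0$ being trivial), and this gives
\[
\E[M] \le \sqrt{2\log(2\ab{T})\, v^2},
\]
which is exactly \eref{cas-set}. There is no real obstacle here; the argument is entirely standard and the only minor care needed is the trivial handling of the degenerate case $v=0$, where every $t\in T$ is the zero vector and both sides of \eref{cas-set} vanish.
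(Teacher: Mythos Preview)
Your proof is correct and is essentially the same as the paper's: the paper reduces to the symmetrized set $\overline T=T\cup\{-t,\ t\in T\}$ and then invokes inequality~(6.3) in Massart~\citeyearpar{MR2319879}, which is precisely the sub-Gaussian maximal inequality you have written out in full. The only difference is that you spell out the Hoeffding--Jensen--optimization argument explicitly instead of citing it.
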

\begin{proof}
For $\overline T=T\cup\{-t,\ t\in T\}$,
\[
\E\cro{\sup_{t\in T}\ab{\sum_{i=1}^{n}\eps_{i}t_{i}}}=\E\cro{\sup_{t\in\overline T}\sum_{i=1}^{n}\eps_{i}t_{i}}
\]
and the result follows from inequality~(6.3) in Massart~\citeyearpar{MR2319879}.

\end{proof}
Let us now prove an analogue of Theorem~\ref{casind} when $\FF$ is a family of indicator functions.
\begin{thm}\label{thm-set}
Let $\gX=(X_{1},\ldots,X_{n})$ be a random vector with independent components taking their values in the measurable space $(\X, \A)$ and let $\CC$ be a countable family of measurable subsets of $\X$. For $\FF=\{\1_{C},\ C\in\CC\}$, $\EE(\gX)=\ac{\strut\{i,\ X_{i}\in C\},\ C\in\CC}$,
\[
\sigma=\sup_{C\in\CC}\left[\frac{1}{n}\sum_{i=1}^{n}\P(X_{i}\in C)\right]^{1/2}\ \ \mbox{and}\ \ \ \Gamma=\E\cro{\log(2\ab{\EE(\gX)})}
\]
the following holds,
\[
\E\cro{Z(\FF)}\le2\E\cro{\overline Z(\FF)}\le 2\left[\sigma\sqrt{2n\Gamma}+4\Gamma\right].
\]
\end{thm}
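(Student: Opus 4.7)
The first inequality $\E\cro{Z(\FF)}\le 2\E\cro{\overline Z(\FF)}$ is Lemma~\ref{L-Sym} applied with $a_i=0$, so I focus on the second inequality. The plan is to condition on $\gX=(X_1,\ldots,X_n)$, reduce the conditional Rademacher supremum to a finite-class bound via Lemma~\ref{lem-1}, and then close a self-referential inequality by solving a quadratic.

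Conditionally on $\gX$, the vectors $(\1_C(X_1),\ldots,\1_C(X_n))$ with $C\in\CC$ form a finite subset $T(\gX)\subset\{0,1\}^n$ whose cardinality equals $|\EE(\gX)|$. Since the coordinates are $0/1$-valued, the parameter $v^2$ in Lemma~\ref{lem-1} equals $V(\gX):=\sup_{C\in\CC}\sum_{i=1}^n\1_C(X_i)$. Applying Lemma~\ref{lem-1} conditionally on $\gX$, taking the outer expectation, and using Cauchy--Schwarz in the form $\E[\sqrt{UW}]\le\sqrt{\E[U]\E[W]}$ for nonnegative $U,W$, I obtain
\[
\E\cro{\overline Z(\FF)}\le\E\cro{\sqrt{2\log(2|\EE(\gX)|)\,V(\gX)}}\le\sqrt{2\Gamma\,\E[V(\gX)]}.
\]

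To bound $\E[V(\gX)]$ I exploit nonnegativity of the indicators: for each $C\in\CC$,
\[
\sum_{i=1}^n\1_C(X_i)\le\ab{\sum_{i=1}^n\pa{\1_C(X_i)-\P(X_i\in C)}}+\sum_{i=1}^n\P(X_i\in C)\le Z(\FF)+n\sigma^2,
\]
so $V(\gX)\le Z(\FF)+n\sigma^2$ pointwise. Taking expectations and plugging in the symmetrization inequality already in hand gives $\E[V(\gX)]\le 2\E\cro{\overline Z(\FF)}+n\sigma^2$. Setting $Y=\E\cro{\overline Z(\FF)}$, the two preceding displays combine into the quadratic inequality $Y^2\le 4\Gamma Y+2n\sigma^2\Gamma$, from which the quadratic formula yields $Y\le 2\Gamma+\sqrt{4\Gamma^2+2n\sigma^2\Gamma}\le 4\Gamma+\sigma\sqrt{2n\Gamma}$ by $\sqrt{a+b}\le\sqrt{a}+\sqrt{b}$. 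Multiplying by $2$ gives the stated bound on $\E\cro{Z(\FF)}$.

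The main obstacle is precisely this self-reference: Lemma~\ref{lem-1} naturally produces the random second-moment factor $V(\gX)$, and the gap between $V(\gX)$ and its deterministic upper bound $n\sigma^2$ is, uniformly in $C$, dominated by $Z(\FF)$ itself, which is exactly the quantity I am trying to control. Closing the loop through symmetrization and a single quadratic in $\E\cro{\overline Z(\FF)}$ is the crux, and it is what produces the additive residual $4\Gamma$ rather than a multiplicative loss on the leading $\sigma\sqrt{2n\Gamma}$ term. A minor point is the use of Cauchy--Schwarz to decouple $\log(2|\EE(\gX)|)$ from $V(\gX)$, which is forced by the fact that these two random quantities are strongly dependent through $\gX$.
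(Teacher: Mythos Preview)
Your proof is correct and follows essentially the same approach as the paper: condition on $\gX$, apply Lemma~\ref{lem-1} to the finite set of realized indicator vectors, pass the expectation through via Cauchy--Schwarz to produce $\sqrt{2\Gamma\,\E[V(\gX)]}$, bound $\E[V(\gX)]\le 2\E[\overline Z(\FF)]+n\sigma^{2}$ by centering and symmetrization, and solve the resulting quadratic in $\E[\overline Z(\FF)]$. The only cosmetic difference is that you phrase the bound on $V(\gX)$ as a pointwise inequality $V(\gX)\le Z(\FF)+n\sigma^{2}$ before taking expectations, whereas the paper centers first and then applies symmetrization directly to the expectation; both routes yield the identical inequality $\E[V(\gX)]\le 2\E[\overline Z(\FF)]+n\sigma^{2}$.
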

This result is of the same flavour as the one Pascal Massart established in Massart~\citeyearpar{MR2319879} (see his Lemma~6.4). Massart's result involves an inexplicit constant, is established under the assumption that the $X_{i}$ are i.i.d.\ and for $\sigma$ satisfying an inequality while our bound is true for all $\sigma$. Nevertheless, the proof of 
our Theorem~\ref{thm-set} is essentially included in that provided by Massart for his  Lemma~6.4. We provide a proof below to assess the constants.
\begin{proof}
By the symmetrization argument (\ref{eq-sym}), 
\begin{eqnarray}
\E\cro{\sup_{C\in \CC}\sum_{i=1}^{n}\1_{C}(X_{i})}&\le& \E\cro{\sup_{C\in \CC}\sum_{i=1}^{n}
\pa{\1_{C}(X_{i})-\P(X_{i}\in C)}}+n\sigma^{2}\nonumber\\&\le& 2\E\cro{\sup_{C\in \CC}\ab{\sum_{i=1}^{n}
\eps_{i}\1_{C}(X_{i})}}+n\sigma^{2}=2\E\cro{\overline Z(\FF)}+n\sigma^{2}.\label{th1-et}
\end{eqnarray}
Let us denote by $\E_{\eps}$ the conditional expectation  given $\gX=(X_{1},\ldots,X_{n})$. Applying Lemma~\ref{lem-1} with $T=\{(1_{E}(1),\ldots,1_{E}(n)),\ E\in\EE(\gX)\}$ we get
\[
\E_{\eps}\cro{\sup_{C\in\CC}\ab{\sum_{i=1}^{n}\eps_{i}\1_{C}(X_{i})}}=\E_{\eps}
\cro{\max_{E\in\EE(\gX)}\ab{\sum_{i\in E}\eps_{i}}}\le\sqrt{2\log(2\ab{\EE(\gX)})\sup_{C\in \CC}\sum_{i=1}^{n}\1_{C}(X_{i})}.
\]
Taking expectations with respect to $\gX$ on both sides of this inequality, we derive from Cauchy-Schwarz's inequality and~\eref{th1-et} that
\[
\E\cro{\overline Z(\FF)}\le\sqrt{2\Gamma\E\cro{\sup_{C\in \CC}\sum_{i=1}^{n}\1_{C}(X_{i})}}\le \sqrt{2\Gamma\pa{2\E\cro{\overline Z(\FF)}+n\sigma^{2}}}.
\] 
Solving the last inequality with respect to $\E\cro{\overline Z(\FF)}$ leads to
\[
\E\cro{\overline Z(\FF)}\le \sqrt{2\Gamma n\sigma^{2}+(2\Gamma)^{2}}+2\Gamma\le 
\sqrt{2\Gamma n\sigma^{2}}+4\Gamma
\]
and the conclusion follows from \eref{eq-sym1}.
\end{proof}

Of particular interest is the situation when $\CC$ is VC with dimension $d$.
In this case, we derive from Sauer's lemma that, for all $n\ge 1$,
\[
\ab{\EE(\gX)}\le\sum_{j=0}^{d\wedge n}\binom{n}{j}.
\]
This shows that for a VC-class $\CC$ with dimension not larger than $d$, 
$\log(2\ab{\EE(\gX)})\le \overline \Gamma_n(d)$ where $\overline \Gamma_n(d)$ is given by~\eref{eq-G}. We immediately deduce from 
Theorem~\ref{thm-set} the following corollary.
\begin{cor}\label{cor-set}
Let $\gX=(X_{1},\ldots,X_{n})$ be a random vector with independent components taking their values in the measurable space $(\X, \A)$ and let $\CC$ be a countable family of measurable subsets of $\X$ which is VC with dimension $d$. For $\FF=\{\1_{C},\ C\in\CC\}$ 
\begin{equation}\label{eq-cor3}
\E\cro{Z(\FF)}\le 2\cro{\sigma\sqrt{2n\overline{\Gamma}_n(d)}+4\overline{\Gamma}_n(d)}
\quad\mbox{with}\quad\sigma=\sup_{C\in\CC}\left[\frac{1}{n}\sum_{i=1}^{n}\P(X_{i}\in C)\right]^{1/2}
\end{equation}
and $\overline \Gamma_n(d)$ given by~\eref{eq-G}.
\end{cor}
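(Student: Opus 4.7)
The plan is straightforward: combine Theorem~\ref{thm-set} with Sauer's lemma, exactly as foreshadowed in the paragraph preceding the statement. First, I would invoke Sauer's lemma to obtain the deterministic bound $\ab{\EE(\gX)}\le \sum_{j=0}^{d\wedge n}\binom{n}{j}$ for every realization of $\gX=(X_1,\ldots,X_n)$, which uses only that $\CC$ is VC with dimension not larger than $d$. Taking logarithms (and multiplying by $2$ inside) yields $\log(2\ab{\EE(\gX)})\le \overline\Gamma_n(d)$ almost surely, where $\overline\Gamma_n(d)$ is defined in~\eref{eq-G}. Integrating against $\P$ then gives $\Gamma\le \overline\Gamma_n(d)$ for the combinatorial quantity $\Gamma=\E\cro{\log(2\ab{\EE(\gX)})}$ appearing in Theorem~\ref{thm-set}.

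Second, Theorem~\ref{thm-set} provides the estimate $\E\cro{Z(\FF)}\le 2\cro{\sigma\sqrt{2n\Gamma}+4\Gamma}$, and the right-hand side is a nondecreasing function of $\Gamma$ on $[0,+\infty)$. Substituting the upper bound $\Gamma\le \overline\Gamma_n(d)$ obtained in the previous step yields~\eref{eq-cor3}. There is no genuine obstacle here: Sauer's lemma is applied pointwise in $\omega$ and the resulting deterministic inequality is then integrated before being plugged into the conclusion of Theorem~\ref{thm-set}.
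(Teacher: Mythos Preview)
Your proposal is correct and follows exactly the argument the paper gives in the paragraph preceding the corollary: apply Sauer's lemma pointwise to bound $\log(2\ab{\EE(\gX)})\le \overline\Gamma_n(d)$, hence $\Gamma\le \overline\Gamma_n(d)$, and substitute into the bound of Theorem~\ref{thm-set} using its monotonicity in $\Gamma$. There is nothing to add.
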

To analyse this bound, let us consider the situation where $\GG$ is the family of indicators $\{\1_{C},\ C\in\DD\}$ indexed by a VC-class  $\DD$ of subsets of $\X$ with dimension $d\ge 1$ and $\FF=\GG(\sigma)$ given by~\eref{def-F}. The bound we get on $\E\cro{Z(\FF)}$ writes as
\[
2\sqrt{2n}\sigma\sqrt{d\log n}(1+o(1))\ \ \mbox{when $n\to+\infty$}.
\]
It can be used to bound from above the smaller quantity
\[
E=\max\ac{\E\cro{\sup_{C\in\CC}\sum_{i=1}^{n}\pa{\1_{C}(X_{i})-\P(X_{i}\in C)}};\E\cro{\sup_{C\in\CC}\sum_{i=1}^{n}\pa{\P(X_{i}\in C)-\1_{C}(X_{i})}}}.
\]
When the $X_{i}$ are i.i.d., an alternative bound on $E$ is given in Theorem~13.7 of Boucheron {\it et al.}~\citeyearpar{MR3185193}. This bound, that we recall below, is based on the control of the universal entropy of a VC-class of sets which is due to Haussler~\citeyearpar{MR1313896}.
\begin{equation}\label{eq-BETAL}
E\le 72\sqrt{n}\sigma \sqrt{d\log\pa{4e^{2}\over \sigma}}\ \ \mbox{provided that}\ \ \sigma\ge 24\sqrt{{d\over 5n}\log\pa{4e^{2}\over \sigma}}.
\end{equation}
This constraint on $\sigma$ can be reformulated as $\sigma\ge \sigma_{n}$ where 
\[
\sigma_{n}={24\over \sqrt{10}}\sqrt{{d\log n\over n}}(1+o(1))\ \ \mbox{when $n\to+\infty$}.
\]
In the case $\sigma=\sigma_{n}$, inequality~\eref{eq-cor3} improves their bound in terms of constants at least when $n$ is large enough. However in the situation where $\sigma$ is fixed and $n$ is large, their bound improves ours by a $\sqrt{\log n}$ factor. We provide below an improvement of Boucheron {\it et al.}'s bound (and hence of~\eref{eq-cor3}) in terms of constants at least when $\sigma$ is large enough compared to $\sigma_{n}$. 

\begin{prop}\label{cor-set2}
Under the assumptions of Corollary~\ref{cor-set} and provided that the dimension of $\CC$ is not larger than $d\ge 1$,
\begin{equation}\label{eq-cor3b}
\E\cro{Z(\FF)}\le 2\E\cro{\overline Z(\FF)}\le 10\sqrt{n}\ \overline H\pa{\sigma\vee a} 
\end{equation}
where $\overline H$ and $a$ are given by~\eref{def-Ha}.
\end{prop}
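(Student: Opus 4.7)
The plan is to replace the crude combinatorial bound used in Theorem~\ref{thm-set} by a chaining argument fed by Haussler's sharp universal entropy bound for VC classes, and then to close the estimate by a self-bounding step that recycles the desired inequality into the variance proxy.

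After invoking the symmetrization of Lemma~\ref{L-Sym} to reduce to $\E[\overline Z(\FF)]$, I would condition on $\gX=(X_1,\ldots,X_n)$ and view the Rademacher sum $C\mapsto \sum_{i=1}^n\eps_i\1_C(X_i)$ as a subgaussian process on the pseudo-metric space $(\CC,\rho_{\hat P_n})$ where $\rho_{\hat P_n}(C,C')^2=n\hat P_n(C\triangle C')$ and $\hat P_n=n^{-1}\sum_i\delta_{X_i}$. Haussler's theorem (Haussler~\citeyearpar{MR1313896}) provides, uniformly over probability measures $Q$ and $\eps\in(0,1]$, a bound of the form $\log N(\CC,L_2(Q),\eps)\le c_1 d+2d\log(1/\eps)$ with explicit $c_1$. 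A Dudley-type chaining inequality started at the empty set, followed by explicit evaluation of the resulting integral, then yields a conditional bound of the shape
\[
\E_\eps[\overline Z(\FF)]\le K_1\sqrt n\,\overline H(S_n\vee a),
\]
where $S_n=\sup_{C\in\CC}\sqrt{\hat P_n(C)}$ is the empirical $L_2$ radius. The cutoff $a$ appears because below the scale $\eps\sim 1/\sqrt n$ Haussler's estimate must be truncated by the combinatorial bound $|\EE(\gX)|\le e^{\overline\Gamma_n(d)}$ coming from Sauer's lemma; calibrating this truncation produces exactly the advertised $a=32\sqrt{\overline\Gamma_n(d)/n}\wedge 1$.

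Next I would average with respect to $\gX$. A direct second-derivative check shows that $\overline H$ is concave on $(0,1]$, so Jensen's inequality together with $\E[S_n]\le\sqrt{\E[S_n^2]}$ and the monotonicity of $\overline H$ give $\E[\overline H(S_n\vee a)]\le \overline H(\sqrt{\E[S_n^2]}\vee a)$. The variance proxy $\E[S_n^2]=\E[\sup_C\hat P_n(C)]$ is controlled by decomposing $\hat P_n(C)=\bar P(C)+[\hat P_n(C)-\bar P(C)]$, using $\bar P(C)\le\sigma^2$, and invoking Lemma~\ref{L-Sym} a second time, which yields $\E[S_n^2]\le \sigma^2+(2/n)\E[\overline Z(\FF)]$.

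Combining the three estimates leaves the self-bounding inequality
\[
\E[\overline Z(\FF)]\le K_1\sqrt n\,\overline H\!\left(\sqrt{\sigma^2+\tfrac{2}{n}\E[\overline Z(\FF)]}\vee a\right),
\]
which I would solve by exploiting $\sqrt{\sigma^2+2y/n}\le \sigma+\sqrt{2y/n}$ together with the subadditivity of $\overline H$ implied by its concavity: setting $y=\E[\overline Z(\FF)]/\bigl(\sqrt n\,\overline H(\sigma\vee a)\bigr)$ reduces the question to bounding the positive root of an explicit inequality in $y$, and the factor~$2$ from the symmetrization step upgrades the resulting constant to the advertised~$10$. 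The main obstacle will be sharp constant tracking at the chaining step; since Dudley's bound is usually stated with an unspecified absolute constant, reaching an explicit universal factor $10$ in the final inequality requires a careful choice of the geometric mesh and a level-by-level use of the variance control rather than the crude $L^\infty$ bound, with the cut-off level tuned so as to match the explicit form of $a$.
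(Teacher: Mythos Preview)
Your overall plan coincides with the paper's: condition on $\gX$, chain with respect to the empirical metric using Haussler's packing bound, truncate the small scales with Sauer's lemma, and close with a self-bounding step that feeds $\E[\overline Z(\FF)]$ back into the empirical radius via a second symmetrization. So the strategy is correct and is exactly the route the paper takes.

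There is, however, a genuine slip in your Jensen step. The inequality $\E[\overline H(S_n\vee a)]\le \overline H\bigl(\sqrt{\E[S_n^2]}\vee a\bigr)$ does not follow from concavity of $\overline H$: the map $x\mapsto\overline H(x\vee a)$ is \emph{not} concave, since it is constant on $[0,a]$ and then jumps to positive right-derivative $\overline H'(a)>0$, producing a convex kink at $a$. Concretely, the intermediate inequality $\E[S_n\vee a]\le\sqrt{\E[S_n^2]}\vee a$ already fails for a two-point law such as $S_n\in\{0,2a\}$ with equal probability. The paper avoids this by building the truncation into the entropy function rather than into the argument: it sets $h(\eta)$ equal to the minimum of Haussler's bound and $\overline\Gamma_n(d)-\log 2$, so that the Dudley integral $H(x)=\int_0^x\sqrt{\log 2+h(u^2)+h(q^2u^2)}\,du$ is finite and concave down to $0$ and the chaining yields $\E_\eps[\overline Z(\FF)]\le b_q\sqrt n\,H(S_n)$ with no $\vee a$. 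Jensen then applies cleanly to the concave map $u\mapsto H(\sqrt u)$, and the cutoff $a$ is introduced only \emph{after} averaging, via the tangent-line bound $G(\sigma^2\vee a^2+\Delta)\le G(\sigma^2\vee a^2)+\Delta\,G'(a^2)$ for $G=H(\sqrt{\cdot})$. This linearizes the self-bounding inequality directly (giving $\E[\overline Z(\FF)]\le b_q\sqrt n\,H(\sigma\vee a)+c\,\E[\overline Z(\FF)]$ with $c<1$), which is also what makes the constant tracking straightforward; your subadditivity-based closure would work in principle but is both messier and lossier for the constants.
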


\begin{proof}
Throughout this proof $d$ stands for $d\wedge n$. Given $\gX=(X_{1},\ldots,X_{n})$, let $P_{\gX}=n^{-1}\sum_{i=1}^{n}\delta_{X_{i}}$ be the empirical distribution based on the $X_{i}$  and for $\eta>0$ let $\CC_{\eta}=\CC_{\eta}(\gX)$ be a maximal $\eta$-separated subset of $\CC$ for the $\IL_{1}(P_{\gX})$-norm, that is, $\CC_{\eta}$ is a (random) subset of $\CC$ satisfying the following properties:  for all $C,C'\in\CC_{\eta}$ with $C\neq C'$, $|C\Delta C'|_{1,\gX}=\sum_{i=1}^{n}\ab{\1_{X_{i}\in C}-\1_{X_{i}\in C'}}>n\eta$ and for all $C\in \CC$, their exists $\Pi_{\eta}C\in\CC_{\eta}$ such that $|C\Delta \Pi_{\eta}C|_{1,\gX}\le n\eta$. Note that for $\eta<1/n$, we necessarily have that $|C\Delta \Pi_{\eta}C|_{1,\gX}=0$ which means that 
\begin{equation}\label{eq-R0}
\1_{C}(X_{i})=\1_{\Pi_{\eta}C}(X_{i})\ \ \mbox{for all $C\in\CC$ and $1\le i\le n$}.
\end{equation}
The proof is decomposed into three steps.

\paragraph{Step 1: an entropy bound.}
In the sequel, we provide an upper bound for the quantities $\log |\CC_{\eta}|$ with $\eta>0$. We first note that given two distinct sets $C,C'\in \CC_{\eta}$, $|C\Delta C'|_{1,\gX}>n\eta>0$, hence 
\[
C\cap\{X_{1},\ldots,X_{n}\}\neq C'\cap\{X_{1},\ldots,X_{n}\},
\]
and since the number of such subsets of $\{X_{1},\ldots,X_{n}\}$ is not larger than $\sum_{k=0}^{d}\binom{n}{k}$ by Sauer's lemma, we necessarily have
\[
\log \ab{\CC_{\eta}}\le \log \left[\sum_{j=0}^{d}\binom{n}{j}\right]=\overline \Gamma_n(d)-\log 2\ \ \mbox{for all $\eta>0$}.
\]
Since two arbitrary subsets $C,C'\in\CC$ satisfy $|C\Delta C'|_{1,\gX}\le n$, if $\eta\ge1$ one should take $\CC_{\eta}=\CC_{1}=\{C_{0}\}$ for some arbitrary $C_{0}\in\CC$ so that $\log|\CC_{\eta}|=0$ for all $\eta\ge 1$. 

When $\eta\in (0,1)$ there exists $k\in\{1,\ldots,n\}$ such that $(k-1)/n\le \eta<k/n$ and for all $C,C'\in\CC_{\eta}$, $|C\Delta C'|_{1,\gX}>k-1$, hence $|C\Delta C'|_{1,\gX}\ge k$, and it follows from Haussler~\citeyearpar{MR1313896} Theorem~1 that
\[
\log \left(\ab{\CC_{\eta}}\right)\le \log\cro{ e(d+1)\pa{2e\over \eta}^{d}}.
\]
Putting these bounds on $\log \ab{\CC_{\eta}}$ together we obtain that, for all $\eta>0$, 
$\log\ab{\CC_{\eta}}\le h(\eta)$ with
\[
h(\eta)=\ac{\cro{\log\pa{e(d+1)(2e)^{d}}+d\log{1\over \eta}}\wedge \cro{\overline \Gamma_n(d)-\log 2}}\1_{(0,1)}(\eta).
\]
Note that $h$ is a nonnegative, right-continuous and nonincreasing function which is bounded from above by $\overline \Gamma_n(d)-\log 2$ and satisfies for $d\ge 1$, $n\ge 1$ and $\eta\in (0,1)$,
\begin{equation}\label{eq-minh}
h(\eta)\ge \min\{2\log(2e),\log(n+1)\}\ge \log2.
\end{equation}

\paragraph{Step 2: preliminary calculations.} For $q=2^{5/2}e^{-6}\in (0,1)$, the function $H$ defined by
\[
H(x)=\int_{0}^{x}\sqrt{\log 2+h(u^{2})+h(q^{2}u^{2})}du\ \ \mbox{for $x>0$}
\]
is nondecreasing and concave. It is also differentiable from the right on $(0,+\infty)$ and its right-hand derivative at $x>0$ is given by 
\begin{equation}\label{eq-R5}
H'(x)=\sqrt{\log 2+h(x^{2})+h(q^{2}x^{2})}\le \sqrt{2\overline \Gamma_n(d)}.
\end{equation}
Besides, for $x\in(0,1)$ $H$ is differentiable and
\[
H'(x)\le\sqrt{ c_{d}+ 4d\log{1\over  x}}
\]
with
\[
c_{d}=\log 2+2\log\pa{e(d+1)(2e)^{d}}+2d\log(1/q)\le 16d\ \ \ \mbox{for $d\ge 1$}.
\]
In particular, we deduce from Jensen's inequality that for $x\in (0,1]$, 
\begin{eqnarray}
H(x)&\le &x\times {1\over x}\int_{0}^{x}\sqrt{c_{d}+4d\log{1\over u}}du\le x\cro{{1\over x}\int_{0}^{x}\pa{c_{d}+4d\log{1\over u}}du}^{1/2}\nonumber\\
&=& x\cro{c_{d}+4d\log{e\over x}}^{1/2}\le 2x\cro{d\log{e^{5}\over x}}^{1/2}=2\overline H(x).\label{eq-bHb}
\end{eqnarray}
Let
\[
\eta_{0}=\eta_{0}(\gX)=\sup_{C\in\CC}\cro{{1\over n}\sum_{i=1}^{n}\1_{X_{i}\in C}}\in [0,1].
\]
By the symmetrization argument (\ref{eq-sym}), 
\begin{eqnarray}
n\E\cro{\eta_{0}(\gX)}&\le& \E\cro{\sup_{C\in \CC}\sum_{i=1}^{n}
\pa{\1_{C}(X_{i})-\P(X_{i}\in C)}}+n\sigma^{2}\nonumber\\&\le& 2\E\cro{\sup_{C\in \CC}\ab{\sum_{i=1}^{n}
\eps_{i}\1_{C}(X_{i})}}+n\sigma^{2}=2\E\cro{\overline Z(\FF)}+n\sigma^{2}\label{th1-et1}.
\end{eqnarray}

\paragraph{Step 3: completion of the proof.} Let us now define for all positive integers $k$, $\eta_{k}=q^{2k}\eta_{0}$, $C_{k}=\Pi_{\eta_{k}}C$ for $C\in\CC$ and $T_{k}$ as the subset of $\R^{n}$  gathering those vectors of the form $(\1_{X_{1}\in C_{k+1}}-\1_{X_{1}\in C_{k}},\ldots,\1_{X_{n}\in C_{k+1}}-\1_{X_{n}\in C_{k}})$ as $C$ varies along $\CC$. For all $i\in\{1,\ldots,n\}$,
\[
\1_{X_{i}\in C}=\1_{X_{i}\in C_{0}}+\sum_{k=0}^{+\infty}\pa{\1_{X_{i}\in C_{k+1}}-\1_{X_{i}\in C_{k}}}
\]
where the sum is actually finite because of~\eref{eq-R0}. Hence, 
\[
\ab{\sum_{i=1}^{n}\eps_{i}\1_{X_{i}\in C}}\le \ab{\sum_{i=1}^{n}\eps_{i}\1_{X_{i}\in C_{0}}}+\sum_{k=0}^{+\infty}\ab{\sum_{i=1}^{n}\eps_{i}\pa{\1_{X_{i}\in C_{k+1}}-\1_{X_{i}\in C_{k}}}}
\]
and 
\[
\overline Z(\FF)\le \ab{\sum_{i=1}^{n}\eps_{i}\1_{X_{i}\in C_{0}}}+\sum_{k=0}^{+\infty}\sup_{t\in T_{k}}\ab{\sum_{i=1}^{n}\eps_{i}t_{i}}.
\]
Denoting by $\E_{\eps}$ the conditional expectation given $\gX$, the quantities $\E_{\eps}\cro{\ab{\sum_{i=1}^{n}\eps_{i}\1_{X_{i}\in C_{0}}}}$ and $\E_{\eps}\cro{\sup_{t\in T_{k}}\ab{\sum_{i=1}^{n}\eps_{i}t_{i}}}$ can be bounded from above by means of  Lemma~\ref{lem-1} using the facts that $\sum_{i=1}^{n}\1_{X_{i}\in C_{0}}\le n\eta_{0}$, $|T_{k}|\le |\CC_{\eta_{k}}||\CC_{\eta_{k+1}}|\le e^{h(\eta_{k})+h(q^{2}\eta_{k})}$  for all $k\ge 1$ and for all $C\in\CC$
\begin{eqnarray*}
\sum_{i=1}^{n}\pa{\1_{X_{i}\in C_{k+1}}-\1_{X_{i}\in C_{k}}}^{2}&=&n\ab{C_{k+1}\Delta C_{k}}_{1,\gX}\le n\cro{\ab{C_{k+1}\Delta C}_{1,\gX}+\ab{C_{k}\Delta C}_{1,\gX}}\\
&\le& n(1+q^{2})\eta_{k}=n{1+q^{2}\over (1-q)^{2}}(\sqrt{\eta_{k}}-\sqrt{\eta_{k+1}})^{2}.
\end{eqnarray*}
We get,
\begin{eqnarray*}
\E_{\eps}\cro{\overline Z(\FF)}&\le& \sqrt{2n}\cro{\sqrt{\eta_{0}\log 2}+{\sqrt{1+q^{2}}\over 1-q}\sum_{k=0}^{+\infty}\pa{\sqrt{\eta_{k}}-\sqrt{\eta_{k+1}}}\sqrt{\log 2+h(\eta_{k})+h(q^{2}\eta_{k})}}\nonumber\\
&\le& \sqrt{2n}\cro{\sqrt{\eta_{0}\log 2}+{\sqrt{1+q^{2}}\over 1-q}\sum_{k=0}^{+\infty}\int_{\sqrt{\eta_{k+1}}}^{\sqrt{\eta_{k}}}\sqrt{\log 2+h(u^{2})+h(q^{2}u^{2})}du}\\
&\le& \sqrt{2n}\cro{\sqrt{\eta_{0}\log 2}+{\sqrt{1+q^{2}}\over 1-q}\int_{0}^{\sqrt{\eta_{0}}}\sqrt{\log 2+h(u^{2})+h(q^{2}u^{2})}du}.
\end{eqnarray*}
%
%\nonumber\\
%&\le& \sqrt{2n}\pa{{\sqrt{1+q^{2}}\over 1-q}+1}\int_{0}^{\sqrt{\eta_{0}}}\sqrt{\log 2+h(u^{2})+h(q^{2}u^{2})}du\nonumber.
%\end{eqnarray}
%where the final bounds 
%
Using~\eref{eq-minh}, 
\[
\sqrt{\eta_{0}\log 2}\le \sqrt{\log 2\over 3\log 3}\int_{0}^{\sqrt{\eta_{0}}}\sqrt{\log 2+h(u^{2})+h(q^{2}u^{2})}du
\] 
and hence, 
\[
\E_{\eps}\cro{\overline Z(\FF)}\le \sqrt{n}b_{q}H\cro{\sqrt{\eta_{0}(\gX)}}\ \ \mbox{with}\ \ b_{q}=\sqrt{2}\pa{{\sqrt{1+q^{2}}\over 1-q}+\sqrt{1\over 3}}<2.5.
\]
Taking the expectation with respect to $\gX$ on both sides and using Jensen's inequality yield to
\begin{equation}\label{eq-R2}
\E\cro{\overline Z(\FF)}\le\sqrt{n}b_{q}\E\cro{H\left(\sqrt{\eta_{0}(\gX)}\right)}\le \sqrt{n}b_{q}H\cro{\sqrt{\E\cro{\eta_{0}(\gX)}}}.
\end{equation}
If $\overline a=32(\overline \Gamma_n(d)/n)^{1/2}\ge 1$, $a=\overline a\wedge 1=1$ and
\begin{equation}\label{eq-cas1}
\E\cro{\overline Z(\FF)}\le \sqrt{n}b_{q}H\cro{\sqrt{\E\cro{\eta_{0}(\gX)}}}\le 2.5\sqrt{n}H(1)=2.5\sqrt{n}H(\sigma\vee 1).
\end{equation}
Otherwise $a=\overline a<1$ and let us set $G(u)=H(\sqrt{u})$ for $u>0$. The function $G$ is nondecreasing, concave, differentiable from the right on $(0,+\infty)$ and its right-hand derivative at $x>0$ is given by $G'(x)=H'(\sqrt{x})/(2\sqrt{x})$. In particular, using~\eref{th1-et1} and the fact that the graph of a concave function lies below its tangents, we obtain that 
\begin{eqnarray*}
H\cro{\sqrt{\E\cro{\eta_{0}(\gX)}}}&=& G(\E\cro{\eta_{0}(\gX)})\le G\pa{\sigma^{2}+2n^{-1}\E\cro{\overline Z(\FF)}}\\
&\le& G\pa{\sigma^{2}\vee a^{2}+2n^{-1}\E\cro{\overline Z(\FF)}}\\
&\le& G(\sigma^{2}\vee a^{2})+{2n^{-1}\E\cro{\overline Z(\FF)}}G'(\sigma^{2}\vee a^{2})\\
&=& H(\sigma\vee a)+{H'(\sigma\vee a)\over an}\E\cro{\overline Z(\FF)}\\
&\le& H(\sigma\vee a)+{H'(a)\over an}\E\cro{\overline Z(\FF)}.
\end{eqnarray*}
This inequality together with~\eref{eq-R2}, leads to 
\begin{equation}\label{eq-R10}
\E\cro{\overline Z(\FF)}\le \sqrt{n}b_{q}H(\sigma\vee a)+{b_{q}H'(a)\over a\sqrt{n}}\E\cro{\overline Z(\FF)}
\end{equation}
and, since by~\eref{eq-R5} and our choice of $\overline a$ (that is $\overline a>b_{q}\sqrt{2n^{-1}\overline \Gamma_n(d)}/(1-b_{q}/2.5)$),
\[
{b_{q}H'(a)\over a\sqrt{n}}\le {b_{q}\over a}\sqrt{2\overline \Gamma_n(d)\over n}\le 1-{b_{q}\over 2.5},
\]
we obtain  that
\begin{equation}\label{eq-cas2}
\E\cro{\overline Z(\FF)}\le 2.5\sqrt{n}H(\sigma\vee a).
\end{equation}

Putting~\eref{eq-cas1} and~\eref{eq-cas2} together and using~\eref{eq-bHb}, we obtain that in both cases
\[
\E\cro{\overline Z(\FF)}\le 2.5\sqrt{n}H(\sigma\vee a)\le 5\sqrt{n}\ \overline H(\sigma\vee a)
\]
and we conclude by~\eref{eq-sym1}.
\end{proof}

\subsection{Completion of the proofs of Theorem~\ref{casind} and~\ref{casind2}}
{We start with the proof of Theorem~\ref{casind}. In view of our convention about the definition of $\E\cro{Z(\FF)}$} we may assume with no loss of generality that $\FF$ is countable. Let us fix $u\in (0,1)$ and write for simplicity, $\CC_{u}(\FF)=\CC_{u}$. Since $\FF$ is weak VC-major with dimension not larger than $d$, $\CC_{u}$ is VC with dimension not larger than $d$ as well. Besides, $\CC_{u}$ is countable since $\FF$ is and by Markov's inequality
\[
\sup_{C\in\CC_{u}}\sum_{i=1}^{n}\P(X_{i}\in C)=\sup_{f\in\FF}\sum_{i=1}^{n}\P(f(X_{i})>u)\le \sup_{f\in\FF}\sum_{i=1}^{n}\cro{{\E\pa{f^{2}(X_{i})}\over u^{2}}\wedge 1}\le n\cro{{\sigma^{2}\over u^{2}}\wedge 1}.
\]
Applying Theorem~\ref{thm-set} to the class of sets $\CC_{u}$ leads to
\begin{equation}\label{eq-et1}
\E\cro{\sup_{C\in\CC_{u}}\ab{\sum_{i=1}^{n}\eps_{i}\1_{C}(X_{i})}}\le \pa{{\sigma\over u}\wedge 1}\sqrt{2n\Gamma_{u}}+ 4\Gamma_{u}.
\end{equation}
Since the elements $f\in\FF$ take their values in $[0,1]$,
\[
\ab{\sum_{i=1}^{n}\eps_{i}f(X_{i})}=\ab{\int_{0}^{1}\sum_{i=1}^{n}\eps_{i}\1_{f(X_{i})>u}\,du}\le \int_{0}^{1}\ab{\sum_{i=1}^{n}\eps_{i}\1_{f(X_{i})>u}}du.
\]
Moreover,
\[
\sup_{f\in\FF}\ab{\sum_{i=1}^{n}\eps_{i}\1_{f(X_{i})>u}}=\sup_{C\in\CC_{u}}\ab{\sum_{i=1}^{n}\eps_{i}\1_{C}(X_{i})}
\]
and it follows that
\begin{eqnarray*}
\sup_{f\in\FF}\ab{\sum_{i=1}^{n}\eps_{i}f(X_{i})}&\le& \int_{0}^{1}\sup_{C\in\CC_{u}}\ab{\sum_{i=1}^{n}\eps_{i}\1_{C}(X_{i})}du
\end{eqnarray*}
and taking expectations on both sides gives 
\begin{equation}\label{eq-imprt}
\E\cro{\overline{Z}(\FF)}\le \int_{0}^{1}\E\cro{\sup_{C\in\CC_{u}}\ab{\sum_{i=1}^{n}\eps_{i}\1_{C}(X_{i})}}du.
\end{equation}
Using~\eref{eq-et1}, 
\begin{eqnarray*}
\E\cro{\overline{Z}(\FF)}&\le&\int_{0}^{1}\cro{\pa{{\sigma\over u}\wedge 1}\sqrt{2n\Gamma_{u}}+ 4 \Gamma_{u}}du\\
&=&\sqrt{2n}\sigma\cro{{1\over \sigma}\int_{0}^{\sigma}\sqrt{\Gamma_{u}}du+\int_{\sigma}^{1}{\sqrt{\Gamma_{u}}\over u}}+4\int_{0}^{1}\Gamma_{u}du\\
\end{eqnarray*}
and the conclusion follows from~\eref{eq-sym1}.

The proof of Theorem~\ref{casind2} is quite similar except that we now bound the right-hand side of~\eref{eq-imprt} using Proposition~\ref{cor-set2}. Since $u\mapsto \overline H(u)$ is concave and nondecreasing on $[0,1]$, 
we get 
\begin{eqnarray*}
\E\cro{\sup_{f\in\FF}\ab{\sum_{i=1}^{n}\eps_{i}f(X_{i})}}
&\le& 5\sqrt{n}\int_{0}^{1}\overline H\cro{(u^{-1}\sigma)\wedge 1)\vee a}du\\
&\le& 5\sqrt{n}\ \overline H\cro{\int_{0}^{1}[(u^{-1}\sigma)\wedge 1)\vee a]du}\\
&=& 5\sqrt{n}\ \overline H\cro{\sigma\vee a-\sigma\log(\sigma\vee a)}
\end{eqnarray*}
which leads to the result.

\section{Additional proofs}\label{sect-pf}
\subsection{Proof of Proposition~\ref{prop-VC}}
If $\FF$ is VC-major with dimension $d$, $\CC(\FF)$ is a VC-class with dimension $d$ therefore, 
whatever $u\in\R$, its subset $\CC_{u}(\FF)$ is also a VC-class with dimension not larger than $d$. Let us now turn to the case where $\FF$ is VC-subgraph with dimension $d$. Let $u\in\R$, if $\CC_{u}$ shatters $\{x_{1},\ldots, x_{k}\}$, for any subset $E$ of $\{1,\ldots,k\}$ one can find a function $f\in\FF$, such that 
\[
E=\left\{i\in\{1,\ldots,k\}\;\mbox{such that}\;f(x_{i})>u\strut\right\}
\]
which exactly means that $\CC_{\times}(\FF)$ shatters $\{(x_{1},u),\ldots,(x_{k},u)\}$ and implies that 
$k\le d$.

\subsection{Proof of Proposition~\ref{prop-def2}}
For all $f\in\FF$ and $u\in\R$, we can write 
\[
\1_{\{f\ge u\}}(x)=\lim_{m\to +\infty}\1_{\{f> u-(1/m)\}}(x)\ \ \mbox{for all}\ x\in\X.
\]
This means that $\CC_{u}^{+}$ is the sequential closure of $\CC_{u}$ for the pointwise convergence of indicator functions. Lemma~2.6.17 $(vi)$ in van der Vaart and Wellner~\citeyearpar{MR1385671} (and its proof) asserts that $\CC_{u}^{+}(\FF)$ is a VC-class with dimension not larger than that of $\CC_{u}$. For the reciprocal, note that for 
all $f\in\FF$ and $u\in\R$, 
\[
\1_{\{f>u\}}(x)=\lim_{m\to +\infty}\1_{\{f\ge u+(1/m)\}}(x) \ \ \mbox{for all}\ x\in\X
\]
and conclude in the same way.

\subsection{Proof of Proposition~\ref{prop-TF}}
Let $u\in\R$. If $\CC_{u}(F\circ\FF)$ cannot shatter at least one point, its dimension is 0 and there is nothing to prove since $d\ge 0$. Otherwise, there exist $k\ge 1$ points $x_{1},\ldots, x_{k}$ in $\X$ and $m$ functions $f_{1},\ldots,f_{m}\in\FF$ such that the set $\left\{\strut\{F\circ f_{j}>u\},\ j=1,\ldots,m\right\}$ shatters $\{x_{1},\ldots, x_{k}\}$. In particular, there exists a point $x_{i}$ and a function $f_{j}$ such that $F\circ f_{j}(x_{i})\le u$ so that 
\[
s=\max_{i,j}\{f_{j}(x_{i})\;\mbox{ such that }\;F\circ f_{j}(x_{i})\le u\}
\]
is well-defined. Clearly, for all $i=1,\ldots,k$ and $j=1,\ldots,m$,
\[
F\circ f_{j}(x_{i})>u\quad\mbox{if and only if}\quad f_{j}(x_{i})>s
\]
and $\CC_{s}(\FF)$ therefore shatters $\{x_{1},\ldots,x_{k}\}$, which implies that $k\le d$.

\subsection{Proof of Corollary~\ref{VC-M}}
Let $\GG$ be the class of all functions $g_f$, $f\in\FF$, defined on $\X$ and with values in $[-b,b]$ given by
\[
g_{f}(x)={1\over 2}\pa{f(x)-\E\cro{f(X_{1})}\strut}.
\]
Since 
\[
\sup_{g\in\GG}\E\cro{g_{f}^{2}(X_{1})}={1\over 4}\sup_{f\in\FF}\Var(f(X_{1}))\le {\sigma^{2}\over 4},
\]
Corollary~\ref{VC-M} will follow from Corollary~\ref{cas-gene} if we can prove that  $\GG$ is weak VC-major. This is a consequence of the next lemma.  
\begin{lem}
If $\FF$ is VC-major with dimension $d$, $\GG$ is weak VC-major with dimension not larger than $d$.
\end{lem}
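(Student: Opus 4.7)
The plan is simply to observe that each level set of a shifted-and-rescaled function $g_f$ coincides with a level set of the original function $f \in \FF$, with a level that depends on $f$. Concretely, for any $u \in \R$ and any $f \in \FF$,
\[
\{x \in \X : g_f(x) > u\} = \left\{x \in \X : f(x) > 2u + \E[f(X_1)]\right\},
\]
so that, writing $v_f(u) = 2u + \E[f(X_1)]$, we have
\[
\CC_u(\GG) = \left\{\{f > v_f(u)\} : f \in \FF\right\} \subset \CC(\FF).
\]

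Next I would invoke the fact that $\FF$ is VC-major with dimension $d$, so $\CC(\FF)$ is a VC-class of subsets of $\X$ with dimension $d$. Since $\CC_u(\GG)$ is a subclass of $\CC(\FF)$, and any subclass of a VC-class with dimension $d$ is itself a VC-class with dimension at most $d$ (a finite set that cannot be shattered by the larger class certainly cannot be shattered by the subclass), we conclude that for every $u \in \R$, $\CC_u(\GG)$ is VC with dimension $\le d$. By definition, this shows that $\GG$ is weak VC-major with dimension $\le d$.

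There is no real obstacle: the proof reduces to the elementary observation that translating by an $f$-dependent constant merely picks one level per function from $\CC(\FF)$ rather than all of them, which can only shrink the class. The main (minor) point to record explicitly is that $\E[f(X_1)]$ is a well-defined real number (since $f$ takes values in the bounded interval $[-b,b]$), so that $v_f(u) \in \R$ and the identification $\{g_f > u\} = \{f > v_f(u)\}$ is legitimate.
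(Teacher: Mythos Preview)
Your proof is correct and follows the same idea as the paper's: you observe that $\{g_f>u\}=\{f>2u+\E[f(X_1)]\}\in\CC(\FF)$, hence $\CC_u(\GG)\subset\CC(\FF)$, and conclude since any subclass of a VC-class of dimension $d$ is VC with dimension at most $d$. The paper phrases the same argument in terms of shattered sets rather than the subclass inclusion, but the content is identical.
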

\begin{proof}
Let $u\in\R$ and $\{x_{1},\ldots,x_{k}\}$ be a nonempty subset of $\X$ which is shattered by $\CC_{u}(\GG)$ (if no such set exists then the dimension of $\CC_{u}(\GG)$ is 0 and there is nothing to prove). For any $E\subset \{1,\ldots,k\}$, there exists $f\in\FF$ such that 
\[
E=\left\{i\in\{1,\ldots,k\}\;\mbox{such that}\;g_f(x_{i})>u\strut\right\}=
\left\{i\in\{1,\ldots,k\}\;\mbox{such that}\;f(x_{i})>t\strut\right\}
\]
with $t=2(u+\E[f(X_{1})])$. Consequently, the class of sets $\CC(\FF)=\left\{\strut\{f>t\},\ f\in\FF, t\in\R\right\}$ shatters $\{x_{1},\ldots,x_{k}\}$ which implies that $k\le d$.
\end{proof}

\paragraph{\bf Acknowledgement} The author would like to thank Lucien Birg\'e for his numerous comments that have led to an improved version of the present paper.

%\bibliography{biblio}
\bibliographystyle{apalike}

\end{document}